 \newtheorem{thm}{}[section]
 \newtheorem{theorem}[thm]{Theorem}
 \newtheorem{corollary}[thm]{Corollary}
 \newtheorem{lemma}[thm]{Lemma}
 \newtheorem{proposition}[thm]{Proposition}
  \theoremstyle{definition}
 \newtheorem{definition}[thm]{Definition}
 \theoremstyle{remark}
 \newtheorem{remark}[thm]{Remark}
 \newtheorem{question}[thm]{Question}
 \newtheorem{example}[thm]{Example}
 \numberwithin{equation}{section}
\newcommand{\abs}[1]{\left\lvert#1\right\rvert}
\newcommand{\norm}[1]{\left\lVert#1\right\rVert}
\newcommand{\enbrace}[1]{\left\lbrace#1\right\rbrace}
\newcommand{\enpar}[1]{\left(#1\right)}
\newcommand{\NN}{\ensuremath{\mathbb{N}}}
\newcommand{\FF}{\ensuremath{\mathbb{F}}}
\newcommand{\Ft}{\ensuremath{\mathcal{F}}}
\newcommand{\EB}{\ensuremath{\mathcal{E}}}
\newcommand{\EE}{\ensuremath{\mathbb{E}}}
\newcommand{\BB}{\ensuremath{\mathcal{B}}}
\newcommand{\GG}{\ensuremath{\mathcal{G}}}
\newcommand{\ee}{\ensuremath{\mathbf{e}}}
\newcommand{\Ind}{\ensuremath{\mathbbm{1}}}
\newcommand{\bphi}{\ensuremath{\mathbf{\Phi}}}
\newcommand{\RR}{\ensuremath{\mathbb{R}}}
\newcommand{\supp}{\operatorname{supp}}
\newcommand{\CC}{\ensuremath{\mathbb{C}}}
\newcommand{\Ks}{\ensuremath{\bm{K}}}
\newcommand{\DL}{\ensuremath{\bm{D}}}
\newcommand\ww{\ensuremath{{\bm{w}}}}
\newcommand{\XX}{\ensuremath{\mathbb{X}}}
\newcommand{\xx}{\ensuremath{\bm{x}}}
\DeclareMathOperator{\sgn}{sign}
\DeclareMathOperator{\Ave}{Ave}
\begin{document}
\title[Counterexamples  in isometric theory of greedy bases]{Counterexamples in isometric theory of symmetric and greedy bases}
\author[F. Albiac]{Fernando Albiac}
\address{Department of Mathematics, Statistics, and Computer Sciencies--InaMat2 \\
Universidad P\'ublica de Navarra\\
Campus de Arrosad\'{i}a\\
Pamplona\\
31006 Spain}
\email{fernando.albiac@unavarra.es}

\author[J. L. Ansorena]{Jos\'e L. Ansorena}
\address{Department of Mathematics and Computer Sciences\\
Universidad de La Rioja\\
Logro\~no\\
26004 Spain}
\email{joseluis.ansorena@unirioja.es}

\author[Ó. Blasco]{Óscar Blasco}
\address{Department of Mathematical Analysis\\
Universidad de Valencia\\
Burjassot\\
46100 Spain}
\email{oscar.blasco@uv.es}

\author[H. V. Chu]{H\`{u}ng Vi\d{\^ e}t Chu}
\address{Department of Mathematics\\
University of Illinois Urbana-Champaign \\ Urbana\\ IL 61820\\USA}
\email{hungchu2@illinois.edu}

\author[T. Oikhberg]{Timur Oikhberg}
\address{Department of Mathematics\\
University of Illinois Urbana-Champaign \\ Urbana\\ IL 61820\\USA}
\email{oikhberg@illinois.edu}

\subjclass[20120]{41A65, 46B15, 46B45}
\keywords{Thresholding greedy algorithm, greedy basis, Property (A), suppression unconditional basis, symmetric basis}
\begin{abstract}
We continue the study initiated in \cite{AW2006} of  properties related to greedy bases in the case when the constants involved are  sharp, i.e., in the case when they are equal to $1$.  Our main goal here is to provide  an example of a Banach space with a basis that satisfies  Property (A) but fails to be $1$-suppression unconditional, thus settling Problem 4.4 from  \cite{AA}. 
In particular, our construction demonstrates that bases with Property (A) need not be $1$-greedy even with the additional assumption that they are unconditional and symmetric.
We also exhibit a finite-dimensional counterpart of this example, and  show that, at least in the finite-dimensional setting, Property (A) does not pass to the dual. As a by-product of our arguments, we prove that a symmetric basis is unconditional if and only if it is total, thus generalizing the well-known result that  symmetric Schauder bases are unconditional.
\end{abstract}
\thanks{F. Albiac acknowledges the support of the Spanish Ministry for Science and Innovation under Grant PID2019-107701GB-I00 for \emph{Operators, lattices, and structure of Banach spaces}. The University of Illinois partially supported the work of H. V. Chu and T. Oikhberg via Campus Research Board award 23026.}
\maketitle
\section{Introduction}\noindent
The roots of this paper lie in the analysis of the optimality of the thresholding greedy algorithm relative to bases in Banach spaces. This optimality is reflected in the sharpness of the constants that appear in the definitions of the different types of greedy-like bases. What justifies studying the ``isometric'' case in general is the fact that various approximation algorithms converge trivially when some appropriate constant is 1. By contrast, when this constant is strictly greater than 1, the problem of convergence can be very difficult to resolve. We mention as an example the so-called $X$-greedy algorithm, whose convergence for the normalized Haar basis of $L_{p}[0,1]$ (whose unconditional constant is strictly greater than 1) is still an open problem.  The reader is referred to \cites{DOSZ2010,Livshits2010} for details and recent developments on this matter. 

For the sake of self-reference we next outline the most relevant notions; the reader is referred to \cite{AlbiacKalton2016}*{Section 10} or to the recent monograph \cite{AABW} for more details.

 Let $\BB=(\xx_n)_{n=1}^{\infty}$ be a family of vectors in a Banach space $\XX$ over the field $\FF=\RR$ or $\CC$. 
Suppose that 
\begin{enumerate}[label=(B.\arabic*),leftmargin=*]
\item\label{complete} $\BB$ is \emph{fundamental}, i.e., the closure of the span of $\BB$ is the whole space $\XX$, 
\item\label{biorthogonal}  there is a (unique) family  $\BB^* = (\xx_n^*)_{n=1}^{\infty}$ in $\XX^*$ biorthogonal to $\BB$, and 
\item\label{seminormalized} $\sup_n \enbrace{ \norm{\xx_n},  \norm{\xx_n^*}}<\infty$.
\end{enumerate}

Throughout this paper such a family $\BB$ will be called a \emph{basis} of $\XX$. 
Sequences $\BB$ that satisfy \ref{complete},  \ref{biorthogonal}, and are \emph{total}, i.e., the span of $\BB^*$ is weak$^*$-dense in $\XX^*$, are called \emph{Markushevich bases}. Schauder bases are a particular case of  Markushevich bases; however we emphasize that we do not  impose a priori  totality to a basis  unless we say otherwise.

A \emph{greedy ordering}  for a vector $x\in\XX$ with respect to a basis $\BB$ is an injective map 
\[
\rho\colon \NN \to \NN
\]
whose image contains the \emph{support} of $x$, 
\[
\supp(x)=\{n\in\NN \colon \xx_{n}^{\ast}(x)\not=0\},
\]
and satisfies $\vert\xx_{\rho(i)}^{\ast}(x)\vert\ge\vert\xx_{\rho(j)}^{\ast}(x)\vert$ if $i\le j$. 
If the sequence $(\xx_n^*(x))_{n=1}^{\infty}$ contains several terms with the same absolute value then $x$ has multiple greedy orderings.  To obtain uniqueness we impose on $\rho$ the extra assumption that if $i<j$, then either 
$\vert \xx^*_{\rho(i)}(x)\vert > \vert \xx^*_{\rho(j)}(x)\vert$ or $\vert\xx^*_{\rho(i)}(x)\vert = \vert \xx^*_{\rho(j)}(x)\vert$ and $\rho(i) < \rho(j)$.
We will refer to this ordering  as the \emph{natural greedy ordering} for $x$. With this convention, the \emph{$m$-th natural greedy approximation}  to $x$ is given  by
\begin{equation}\label{eqTGA}
\GG_{m}(x) = \sum_{j=1}^{m}\xx_{\rho(j)}^{\ast}(x)\xx_{\rho(j)}, \quad m\in\NN,
\end{equation}
where $\rho$ is the natural greedy ordering. The \emph{thresholding greedy algorithm}, or TGA for short, is the sequence of  operators $(\GG_m)_{m=1}^{\infty}$  given by the formula in \eqref{eqTGA}.

In order to guarantee the reconstruction of ``signals'' in $\XX$, and thus make the TGA a reasonable method of approximation, we impose the mild requirement that $(\GG_{m}(x))_{m=1}^{\infty}$ converge to $x$ for every $x\in \XX$. 
Wojtaszczyk \cite{Woj2000} (cf.\ \cite{AABW}*{Theorem 4.1}) proved that this condition is equivalent to the uniform boundedness of the (nonlinear and unbounded) operators $\GG_m$, i.e,
\begin{equation}\label{eqdefQG}
\norm{ \GG_{m}(x)}\le C\norm{ x},\quad x\in \XX, \, m\in\NN,
\end{equation}
for some constant $C\ge 1$. Following \cite{KT99}, we call such bases \emph{$C$-quasi-greedy}.

To gauge the performance of the TGA it is convenient to compare the accuracy of the greedy approximation with the best possible one. Given $x \in \XX$ and $m \in \NN$ we consider two different theoretical approximation errors,
\[
\sigma_m(x) = \inf \left\{ \norm{ x - \sum_{n \in A} \alpha_n \, \xx_n } \colon \alpha_n \in \FF, \, \abs{A} \leq m \right\} ,
\]
and
\[
\widetilde{\sigma}_m(x) = \inf \left\{ \norm{ x - S_A(x)} \colon \abs{A} \leq m \right\},
\]
 where $S_A(x)=\sum_{n\in A} \xx_n^*(x)\, \xx_n$ is the natural projection onto the linear span of $\{\xx_{n}\colon n\in A\}$. Konyagin and Temlyakov  \cite{KT99} defined a basis to be \emph{greedy} if $\GG_{m}(x)$ is essentially the \emph{best $m$-term approximation} to $x$ using basis vectors, i.e., there exists a constant $C\ge 1$ such that
\begin{equation}\label{defgreedy}
\norm{ x-\GG_{m}(x)} \le C \sigma_m(x),\quad x\in \XX,\; m\in\NN.
\end{equation}
The smallest constant $C$ in \eqref{defgreedy} is the \emph{greedy constant} of the basis and will be denoted by $C_{g}$. If $C_g\le C$, we say that $\BB$ is $C$-greedy. Note that if  $C_g=1$, then $\norm{ x-\GG_{m}(x)} =\sigma_{m}(x)$ for all $x\in \XX$ and $m\in\NN$, so the greedy algorithm gives the {\emph best} $m$-term approximation for each $x\in \XX$.

Konyagin and Temlyakov \cite{KT99} showed that greedy bases can be intrinsically characterized as unconditional bases with the additional property of being \emph{democratic}, i.e., there exists a constant $\Delta\ge 1$ so that
\[
\norm{\Ind_A} \le \Delta \norm{\Ind_B}.
\]
whenever $\abs{A}=\abs{B}$. Here, as  is customary, we use the notation
\[
\Ind_A=\Ind_A[\BB,\XX]=\sum_{n\in A} \xx_{n}.
\] 
Vectors whose coefficients have modulus one are also relevant in greedy approximation. Given $\varepsilon=(\varepsilon_n)_{n\in A}$ in $\EE=\{\lambda\in\FF \colon \abs{\lambda}=1\}$, we set
\[
\Ind_{\varepsilon,A}=\Ind_{\varepsilon,A}[\BB,\XX]=\sum_{n\in A} \varepsilon_n\, \xx_{n}.
\] 

Recall also  that a basis  $(\xx_n)_{n=1}^\infty$  is \emph{unconditional}   if for $x\in \XX$, the series $\sum_{i=1}^\infty \xx_{\pi(i)}^*(x)\xx_{\pi(i)}$ converges  to $x$ for any permutation $\pi$ of $\NN$. It is well known that the property of being unconditional is equivalent to that of being \emph{suppression unconditional}, which means that there is a constant $K\ge 1$ such that 
\begin{equation}\label{unifbound}
\norm{ S_A(x)} \le K \norm{ x}, \quad A\subset\NN, \, \abs{A}<\infty, \, x\in\XX.
\end{equation}
The smallest constant $K$ in \eqref{unifbound} is called the \emph{suppression unconditional constant} of the basis and is denoted by $K_{s}$. If  \eqref{unifbound} holds for a  constant $K$ we say that $\BB$ is \emph{$K$-suppression unconditional.}   If $(\xx_n)_{n=1}^\infty$ is unconditional then it is \emph{$C$-lattice unconditional}, that is, there is a constant $C\ge 1$ so that
\begin{equation}\label{latticebound}
\norm{\sum_{n=1}^\infty b_n\,\xx_n} \le C \norm{\sum_{n=1}^\infty a_n\,\xx_n}, \quad (a_n)_{n=1}^\infty\in c_{00},\,  \abs{b_n}\le \abs{a_n}.
\end{equation}
We denote by $K_l$ the optimal constant $C$ in \eqref{latticebound}.  
 
Dilworth et al.\  \cite{DKKT2003} relaxed the condition defining greedy bases by comparing, for each $x$ and $m$, the error in the approximation of $x$ by $\GG_{m}(x)$ with the best theoretical approximation error from projections, $\widetilde{\sigma}_{m}(x)$. They   defined a basis to be $C$-\emph{almost greedy}, $1\le C <\infty$, if 
\begin{equation}\label{defalmostgreedy}
\norm{ x-\GG_{m}(x)} \le C \widetilde{\sigma}_m(x),\quad x\in \XX,\; m\in\NN.
\end{equation}

Going back to the optimality issues that  are our concern  in this note, it is  straightforward  to check that if $\BB$ is an orthonormal basis of a Hilbert space then $\BB$ is $1$-greedy.  However, due to computational issues, the verification of  condition \eqref{defgreedy} can be very hard even for well-known bases in Banach spaces.   Answering a question raised by Wojtaszczyk  in \cite{Wojt2003},  the authors  found in \cite{AW2006}  the following characterization of $1$-greedy bases in the spirit of the aforementioned characterization of greedy bases by Konyagin and Temlyakov. 

\begin{theorem}[\cite{AW2006}*{Theorem 3.4}]\label{AW}
A basis  of a Banach space $\XX$ is $1$-greedy if and only if it is $1$-suppression unconditional and satisfies Property (A).
\end{theorem}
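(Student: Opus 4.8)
The statement is an equivalence, so the plan is to prove the two implications separately. Throughout I work with the formulation of Property (A) asserting that $\norm{x+\Ind_{\varepsilon,A}}=\norm{x+\Ind_{\eta,B}}$ whenever $\abs{A}=\abs{B}<\infty$, $(A\cup B)\cap\supp(x)=\emptyset$, $\sup_n\abs{\xx_n^*(x)}\le 1$, and $\varepsilon\in\EE^A$, $\eta\in\EE^B$.

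For sufficiency (Property (A) and $1$-suppression unconditionality $\Rightarrow$ $1$-greedy) I would fix $x$ and $m$, let $A$ be the greedy set realizing $\GG_m(x)=S_A(x)$, set $t=\min_{n\in A}\abs{\xx_n^*(x)}$ (if $t=0$ then $\GG_m(x)=x$ and there is nothing to prove), and let $z=\sum_{n\in B}\alpha_n\xx_n$ be an arbitrary competitor with $\abs{B}\le m$; padding $B$ with zero coefficients I may assume $\abs{B}=\abs{A}=m$. First, $1$-suppression unconditionality gives $\norm{x-z}\ge\norm{(I-S_B)(x-z)}=\norm{(I-S_B)x}$. Writing $P=A\setminus B$, $Q=B\setminus A$ (so $\abs{P}=\abs{Q}$) and $u=S_{(A\cup B)^c}x$, one has $(I-S_B)x=u+\sum_{n\in P}\xx_n^*(x)\xx_n$ and $x-S_A(x)=u+\sum_{n\in Q}\xx_n^*(x)\xx_n$, where the coefficients on $P$ have modulus $\ge t$ and those on $Q$ and $\supp(u)$ have modulus $\le t$. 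The next step is a magnitude-reduction principle, itself a consequence of $1$-suppression unconditionality: multiplying a single coordinate by $\lambda\in[0,1]$ is the convex combination $\lambda\,\mathrm{Id}+(1-\lambda)S_{\{n\}^c}$, hence does not increase the norm; iterating over the finitely many coordinates of $P$ and $Q$ yields $\norm{u+\sum_P t\varepsilon_n\xx_n}\le\norm{u+\sum_P\xx_n^*(x)\xx_n}$ and $\norm{u+\sum_Q\xx_n^*(x)\xx_n}\le\norm{u+\sum_Q t\eta_n\xx_n}$, where $\varepsilon_n,\eta_n$ are the phases of the corresponding coefficients. Finally, since $u/t$ has coefficients bounded by $1$ and $P,Q$ are disjoint from $\supp(u)$ with $\abs{P}=\abs{Q}$, Property (A) gives $\norm{u+t\Ind_{\eta,Q}}=\norm{u+t\Ind_{\varepsilon,P}}$. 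Chaining the displays produces $\norm{x-\GG_m(x)}\le\norm{(I-S_B)x}\le\norm{x-z}$, i.e.\ $\norm{x-\GG_m(x)}\le\sigma_m(x)$.

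For necessity I assume $\BB$ is $1$-greedy; it is in particular greedy, hence unconditional, so $S_{[1,N]}x\to x$, a fact I use repeatedly. To prove Property (A) I first reduce to disjoint sets by inserting an auxiliary $C$ disjoint from $A\cup B\cup\supp(x)$ with $\abs{C}=m$, so that it suffices to show $\norm{x+\Ind_{\varepsilon,A}}=\norm{x+\Ind_{\theta,C}}$ when $A,C$ are disjoint and disjoint from $\supp(x)$. For this I set $w_\delta=x+(1+\delta)\Ind_{\varepsilon,A}+\Ind_{\theta,C}$ with $\delta>0$; since the coefficients of $x$ (bounded by $1$) and the unimodular coefficients on $C$ are all strictly dominated by the coefficients $1+\delta$ on $A$, the set $A$ is the unique greedy set of order $m$, so $\GG_m(w_\delta)=(1+\delta)\Ind_{\varepsilon,A}$ and $1$-greediness gives $\sigma_m(w_\delta)=\norm{x+\Ind_{\theta,C}}$. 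Testing $\sigma_m(w_\delta)$ against the competitor $\Ind_{\theta,C}$ yields $\norm{x+\Ind_{\theta,C}}\le\norm{x+(1+\delta)\Ind_{\varepsilon,A}}$, and letting $\delta\to0$ gives $\norm{x+\Ind_{\theta,C}}\le\norm{x+\Ind_{\varepsilon,A}}$; the symmetric choice delivers the reverse inequality, hence equality. It then remains to prove $1$-suppression unconditionality: for finite $A\subseteq\supp(x)$ with $\abs{A}=m$, I set $v_\lambda=\lambda S_A(x)+(I-S_A)x$ with $\lambda$ large, so that $A$ is the greedy set of order $m$ for $v_\lambda$; then $1$-greediness gives $\norm{(I-S_A)x}=\sigma_m(v_\lambda)\le\norm{v_\lambda-(\lambda-1)S_A(x)}=\norm{x}$, controlling all cofinite projections. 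To pass to a finite projection $S_B$, I apply this to a finitely supported $y$ (with $D=\supp(y)\setminus B$ one has $S_B y=(I-S_D)y$), and then take $y=S_{[1,N]}x$, letting $N\to\infty$ and using $\norm{S_{[1,N]}x}\to\norm{x}$ to conclude $\norm{S_B x}\le\norm{x}$.

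I expect the main obstacle to be exactly this last point: the greedy inequality only directly bounds the norm of a greedy \emph{remainder}, which is a \emph{cofinite} coordinate projection, whereas $1$-suppression unconditionality concerns \emph{finite} projections $S_B$; bridging the gap requires the two-step passage through finitely supported vectors together with the convergence $S_{[1,N]}x\to x$, and one must be careful that the scaling forcing $A$ to be greedy introduces no ties at the thresholding level. By contrast, the sufficiency direction is comparatively routine once the magnitude-reduction principle is isolated, and the Property (A) half of necessity is a clean perturbation-plus-limit argument.
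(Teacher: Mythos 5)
The paper never proves this statement: it is imported verbatim as \cite{AW2006}*{Theorem 3.4}, so your attempt can only be measured against the original Albiac--Wojtaszczyk argument, which it reproduces in all essentials. Your sufficiency chain (suppress the competitor's support via $I-S_B$, lower the coefficients on $A\setminus B$ to the threshold $t$ by the convex-combination trick, transfer them to $B\setminus A$ by Property (A), then relax back up to the actual small coefficients) and your necessity perturbations (the $(1+\delta)$-inflated block in $w_\delta$ and the $\lambda$-dilated block in $v_\lambda$, each forced to be the unique greedy set so that $1$-greediness can be tested against a hand-picked competitor) are exactly the standard devices, and each individual step checks out.

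Three points should be tightened, none fatal. First, the paper's Property (A) is symmetry for largest coefficients with constant $1$, i.e.\ $\norm{\Ind_{\varepsilon,A}+f}\le \norm{\Ind_{\varepsilon,B}+f}$ for \emph{disjoint} $A,B$ with $\abs{A}\le\abs{B}$, whereas you prove the equal-cardinality equality version; the bridge requires $1$-suppression unconditionality, which you have, and is one line --- for $\abs{A}<\abs{B}$ pick $B'\subset B$ with $\abs{B'}=\abs{A}$, apply your equality to $(A,B')$, and note that $\Ind_{\varepsilon,B'}+f=(I-S_{B\setminus B'})(\Ind_{\varepsilon,B}+f)$ --- but it must be said. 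Second, since the paper's definition already assumes $A\cap B=\emptyset$, your auxiliary-set reduction is unnecessary; as written it can even fail, because when $\supp(x)$ is cofinite the set $\NN\setminus(A\cup B\cup\supp(x))$ may have fewer than $m$ elements, so no admissible $C$ exists. Third, invoking Konyagin--Temlyakov (``greedy hence unconditional'') to secure $S_{[1,N]}x\to x$ is legitimate but heavier than needed, and it is the one place a reader might suspect circularity: each $S_B$ is a finite-rank operator assembled from the continuous functionals $\xx_n^*$, hence bounded, and its norm is determined on the dense subspace of finitely supported vectors, so your bound $\norm{S_By}\le\norm{y}$ for finitely supported $y$ already yields $\norm{S_B}\le 1$ with no limiting argument. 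The same density remark is what rigorously justifies $\norm{(I-S_B)y}\le\norm{y}$ under $1$-suppression unconditionality, a fact you use silently both in the first sufficiency step and inside the magnitude-reduction principle.
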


Property (A) is a weak symmetry condition that was introduced in  \cite{AW2006} and that was generalized to the concept of  symmetry for largest coefficients in \cite{AA}*{Definition 3.1}. A basis $\BB$ is \emph{symmetric for largest coefficients} if there is a constant $C\ge 1$ such that
\begin{equation}\label{eq:SLC}
\norm{\Ind_{\varepsilon,A}+f} \le C \norm{\Ind_{\varepsilon,B}+f}
\end{equation}
for all $f\in \XX $ with $\max_n \abs{\xx_n^*(f)}\le 1$, all $A$, $B\subset \NN$ with $\abs{A}\le \abs{B}<\infty$ and $A\cap B = A\cap\supp(f)=B\cap\supp(f)=\emptyset$, and all $\varepsilon\in\EE^{A\cup B}$. 
If we can choose $C=1$ in \eqref{eq:SLC} we say that $\BB$ has   Property (A).

The neat description of $1$-greedy bases provided by Theorem~\ref{AW} inspired further work in the isometric theory of greedy bases which led to the following characterizations of $1$-quasi-greedy bases and $1$-almost greedy bases precisely in terms  of the same ingredients  but in disjoint occurrences.  

\begin{theorem}[\cite{AlbiacAnsorena2016JAT}*{Theorem 2.1}]\label{AAThm} A   basis  of a Banach space   is $1$-quasi-greedy if and only if  it  is  1-suppression unconditional.
\end{theorem}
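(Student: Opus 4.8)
The plan is to prove the two implications separately, with the forward direction ($1$-quasi-greedy $\Rightarrow$ $1$-suppression unconditional) being the substantial one. The reverse implication is immediate: for $x\in\XX$ and $m\in\NN$ the greedy sum $\GG_m(x)$ equals $S_A(x)$ for the set $A=\{\rho(1),\dots,\rho(m)\}$ of positions of the $m$ largest coefficients, so if $\norm{S_A(x)}\le\norm{x}$ for every finite $A$, then in particular $\norm{\GG_m(x)}\le\norm{x}$, i.e.\ $\BB$ is $1$-quasi-greedy.

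For the forward direction I would first reduce to finitely supported vectors: since each $S_A$ is a bounded finite-rank operator and $\BB$ is fundamental, it suffices to prove $\norm{S_A(x)}\le\norm{x}$ for $x\in\operatorname{span}\BB$ and then invoke continuity of $S_A$ together with density. Next I would record the ``greedy-set'' form of the hypothesis: whenever $B$ is finite with $\min_{n\in B}\abs{\xx_n^*(x)}\ge\max_{n\notin B}\abs{\xx_n^*(x)}$, one has $\norm{S_B(x)}\le\norm{x}$. This follows by a tie-breaking perturbation, replacing $x$ by $x+\delta\sum_{n\in B}\sgn(\xx_n^*(x))\,\xx_n$ with $\delta>0$ small, so that the coefficients indexed by $B$ become strictly dominant and $S_B$ agrees with $\GG_{\abs{B}}$ on the perturbed vector, and then letting $\delta\to0^{+}$.

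The heart of the matter is to pass from greedy sets to arbitrary sets. Since $S_A(x)$ is obtained by deleting the coefficients indexed by $\supp(x)\setminus A$ one at a time, and each intermediate vector is again finitely supported, it is enough to prove $\norm{x-\xx_j^*(x)\,\xx_j}\le\norm{x}$ for every $j$ and every finitely supported $x$. Here I would exploit convexity. Writing $a=\xx_j^*(x)$, $\sigma=\min_{n\in\supp(x)\setminus\{j\}}\abs{\xx_n^*(x)}$, and $q(r)=\norm{(x-a\xx_j)+r\,\sgn(a)\,\xx_j}$ for $r\ge0$, the function $q$ is convex. At the level $r=\sigma$ the coefficient at $j$ ties for the smallest, so deleting it keeps a greedy set and the greedy-set bound yields $q(0)\le q(\sigma)$. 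If $j$ already carries a minimal coefficient, that is $\abs{a}\le\sigma$, then $q(0)\le q(\abs{a})=\norm{x}$ is itself such an instance; otherwise convexity upgrades the single comparison to monotonicity, since the nonnegative secant slope on $[0,\sigma]$ forces $q$ to be nondecreasing on $[\sigma,\infty)$, whence $q(0)\le q(\sigma)\le q(\abs{a})=\norm{x}$, which is exactly $\norm{x-a\,\xx_j}\le\norm{x}$.

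The main obstacle is precisely this last step, namely deleting a coefficient that is large rather than small; this is the genuinely ``anti-greedy'' phenomenon, and it is where the assumption that the constant equals $1$ is indispensable. The tie-breaking perturbation only produces comparisons at the bottom of the coefficient ranking, and it is the convexity of $r\mapsto q(r)$ that propagates a bottom-level inequality to all larger values. For a constant $C>1$ this mechanism collapses---indeed conditional quasi-greedy bases exist---so any workable argument must, as above, extract an exact ($C=1$) monotonicity rather than a merely uniform bound.
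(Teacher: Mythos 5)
Your proof is correct, but note first that there is no in-paper proof to compare it against: the paper imports this statement verbatim from \cite{AlbiacAnsorena2016JAT}*{Theorem 2.1} and never proves it. Judged on its own terms, your argument is sound and is essentially the known one. The reverse implication is indeed immediate. In the forward direction, each of your four moves checks out: the density reduction (each $S_A$ is bounded and $\BB$ is fundamental); the tie-breaking perturbation, which legitimately converts the hypothesis, stated for the natural greedy ordering, into the cleaner statement that $\norm{S_B(x)}\le\norm{x}$ whenever $\min_{n\in B}\abs{\xx_n^*(x)}\ge\max_{n\notin B}\abs{\xx_n^*(x)}$; the reduction of a general $S_A$ to removing one coordinate at a time; and the convexity step, which is the crux. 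There, $q(r)=\norm{(x-a\xx_j)+r\sgn(a)\xx_j}$ is convex, the bound $q(0)\le q(\sigma)$ is an instance of the greedy-set estimate applied to the vector at level $\sigma$ (whose $j$-th coefficient is then minimal in modulus), and monotonicity of difference quotients of convex functions gives $q(0)\le q(\sigma)\le q(\abs{a})=\norm{x}$ when $\abs{a}>\sigma$. If you prefer to avoid secant slopes, the same step can be phrased purely via the triangle inequality: with $\lambda=\sigma/\abs{a}\in(0,1)$ one has $(x-a\xx_j)+\sigma\sgn(a)\xx_j=(1-\lambda)(x-a\xx_j)+\lambda x$, whence $q(\sigma)\le(1-\lambda)q(0)+\lambda\norm{x}$, and combining this with $q(0)\le q(\sigma)$ yields $q(0)\le\norm{x}$; this convex-combination phrasing is how the mechanism usually appears in print, and it is identical in substance to yours. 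Two degenerate cases deserve one line each for completeness, though both are trivial: $\supp(x)=\{j\}$ (then $q(0)=0$), and greedy sets $B$ containing indices outside $\supp(x)$ (then necessarily $\supp(x)\subset B$, so $S_B(x)=x$). Your closing remark is also on target: for constants $C>1$ the implication genuinely fails (conditional quasi-greedy bases exist), and it is exactly the sharpness $C=1$ that lets convexity upgrade the bottom-level comparison to monotonicity.
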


\begin{theorem}[\cite{AA}*{Theorem 2.3}]\label{AAThm2}
A basis of a Banach space is $1$-almost greedy if and only if  it satisfies Property (A).
\end{theorem}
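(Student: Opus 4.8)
The plan is to prove the two implications separately, using only Property (A) together with the convexity of the norm. For the implication ``$1$-almost greedy $\Rightarrow$ Property (A)'' I would argue directly from the defining inequality. Fix $f$ with $\max_n\abs{\xx_n^*(f)}\le 1$, disjoint finite sets $A,B$ with $\abs A\le\abs B$, both disjoint from $\supp(f)$, and signs $\varepsilon\in\EE^{A\cup B}$, and test the almost greedy inequality on
\[
x=(1+\delta)\Ind_{\varepsilon,B}+\Ind_{\varepsilon,A}+f,\qquad \delta>0,
\]
whose $\abs B$ largest coefficients are precisely those indexed by $B$. Then $\GG_{\abs B}(x)=(1+\delta)\Ind_{\varepsilon,B}$, so $x-\GG_{\abs B}(x)=\Ind_{\varepsilon,A}+f$, while the admissible set $A$ (here $\abs A\le\abs B$) gives $x-S_A(x)=(1+\delta)\Ind_{\varepsilon,B}+f$. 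The inequality $\norm{x-\GG_{\abs B}(x)}\le\widetilde{\sigma}_{\abs B}(x)\le\norm{x-S_A(x)}$ then reads $\norm{\Ind_{\varepsilon,A}+f}\le\norm{(1+\delta)\Ind_{\varepsilon,B}+f}$, and letting $\delta\to 0^+$ yields Property (A).

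For the converse I would reduce matters to the pointwise statement that, for every $x$ and $m$, the greedy set $\Lambda$ (of size $\le m$) satisfies $\norm{x-S_\Lambda(x)}\le\norm{x-S_D(x)}$ for every $D$ with $\abs D\le m$; since $\abs\Lambda\le m$ this gives $\norm{x-\GG_m(x)}\le\widetilde{\sigma}_m(x)$. Writing $g=\sum_{n\notin\Lambda\cup D}\xx_n^*(x)\xx_n$, $A'=D\setminus\Lambda$ and $B'=\Lambda\setminus D$, one has $x-S_\Lambda(x)=g+\sum_{n\in A'}\xx_n^*(x)\xx_n$ and $x-S_D(x)=g+\sum_{n\in B'}\xx_n^*(x)\xx_n$, where $\abs{A'}\le\abs{B'}$, the coefficients on $B'$ dominate the threshold $\tau=\min_{n\in\Lambda}\abs{\xx_n^*(x)}$, those on $A'$ lie below $\tau$, and $\norm{g}_\infty\le\tau$. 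Thus everything reduces to the comparison
\[
\Big\|g+\sum_{n\in A'}a_n\xx_n\Big\|\le\Big\|g+\sum_{n\in B'}b_n\xx_n\Big\|,\qquad \abs{a_n}\le\tau\le\abs{b_n},\ \norm{g}_\infty\le\tau .
\]

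To establish this I would assemble three elementary consequences of Property (A). First, taking $A=\emptyset$ in \eqref{eq:SLC} and rescaling shows that adjoining a coordinate whose modulus dominates the current sup-norm cannot decrease the norm. Second, convexity of $t\mapsto\norm{w+t\eta\xx_j}$ on $[0,L]$, combined with the first fact, shows that raising a single coefficient up to any level $L\ge\norm{w}_\infty$ does not decrease the norm. Third, the equal-cardinality case of \eqref{eq:SLC}, applied in both directions, gives the exchange $\norm{w+L\eta\xx_p}=\norm{w+L\eta'\xx_q}$ whenever $L\ge\norm{w}_\infty$ and $p,q\notin\supp(w)$. Combining the last two yields the single-coordinate replacement $\norm{w+a\xx_p}\le\norm{w+b\xx_q}$ provided $\abs a\le\abs b$ and $\abs b\ge\norm{w}_\infty$. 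I would then match $A'$ with the $\abs{A'}$ smallest-modulus coordinates of $B'$ and perform these replacements \emph{in increasing order of $\abs{b_n}$}; this ordering guarantees that at each step the incoming large coefficient dominates the sup-norm of the current background, so every move stays in the regime where Property (A) applies. Finally the remaining $\abs{B'}-\abs{A'}$ coordinates of $B'$ are adjoined, again in increasing order of modulus, using the first consequence.

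The main obstacle is precisely this reconciliation of coefficient magnitudes. Property (A) only compares configurations that share a common magnitude level against a background it dominates, and—since a basis with Property (A) need not be lattice, or even suppression, unconditional—one may \emph{not} freely rescale or delete coordinates. The device that overcomes this is the convexity observation of the second step together with processing the coordinates in increasing order of magnitude, which ensures that no elementary step ever leaves the valid regime.
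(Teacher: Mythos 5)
The paper does not prove this statement: Theorem~\ref{AAThm2} is imported verbatim from \cite{AA}*{Theorem 2.3}, so there is no in-paper proof to compare yours against; I can only judge your argument on its own merits and against the argument in the cited literature. Your proof is correct. The ``only if'' direction is the standard perturbation test, and letting $\delta\to 0^+$ is legitimate by continuity of the norm. In the ``if'' direction, the reduction to comparing $g+\sum_{n\in A'}a_n\xx_n$ with $g+\sum_{n\in B'}b_n\xx_n$ (with $\abs{A'}\le\abs{B'}$, $\abs{a_n}\le\tau\le\abs{b_n}$, $\norm{g}_\infty\le\tau$, supports pairwise disjoint) is exactly what is needed, and your three elementary consequences of Property (A) are each sound: the case $A=\emptyset$ of \eqref{eq:SLC}, after rescaling, says that adjoining a coordinate dominating the background cannot decrease the norm; a convex function on $[0,L]$ is bounded by the larger of its endpoint values, which combined with the previous fact gives the raising step; and the equal-cardinality case of \eqref{eq:SLC}, read in both directions, gives the exchange. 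The crux you correctly single out---that Property (A) applies only when the incoming coefficient dominates the sup-norm of the current background---is indeed resolved by processing coordinates in increasing order of $\abs{b_n}$: at each step the only background coefficients that can exceed $\tau$ are the already-placed $b$'s, and the ordering makes them dominated by the incoming one. The degenerate cases ($\tau=0$, $A'=\emptyset$, $D\subseteq\Lambda$) are trivial, and nothing in your argument uses totality of the basis, which matters here since the paper's bases need not be Markushevich.

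Compared with the proofs in the cited literature (\cite{AW2006}, \cite{AA}; see also \cite{AABW}*{Section 6}), which dispose of the small coefficients on $D\setminus\Lambda$ in one stroke by writing $S_{D\setminus\Lambda}(x)$ as a convex combination of the sign vectors $\tau\,\Ind_{\varepsilon,D\setminus\Lambda}$ and then raise the large coefficients on $\Lambda\setminus D$ by a convexity argument, your scheme of one-coordinate replacement moves is a mild but genuine variant: it trades the extreme-point/convex-hull step (which requires a small extra argument in the complex case) for the increasing-order bookkeeping, and is otherwise the same circle of ideas with the same two pillars.
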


Since almost greedy basis are in particular quasi-greedy, one could expect that  when the almost greedy constant is sharp, the implication would still hold, i.e., that being $1$-almost greedy implies being $1$-quasi-greedy. In light of  Theorems \ref{AAThm} and \ref{AAThm2},  we arrive naturally at the following question,  which enquires about the overlapping of the two properties that characterize $1$-greedy bases. 

\begin{question}\label{Q1}
Does Property (A) imply unconditionality with $K_s = 1$ (see \cite{AA}*{Problem 4.4})?
\end{question}

Theorem~\ref{AAThm} is somewhat surprising since it connects nonlinear properties in approximation theory in Banach spaces (quasi-greediness) with  linear properties (such as unconditionality). Besides, it exhibits how an isometric property could lead to an improvement of the qualitative behaviour of a basis. It is therefore natural to ask whether this is also the case with almost greedy bases.
 
\begin{question}\label{Q2} Is there a conditional basis with Property (A) (see \cite{AA}*{Problem 4.4})?
\end{question}

If we restrict our attention to unconditional bases, the study of the relation between Property (A) and unconditionality reduces to the problem of determining whether Property (A) implies some upper bound for the unconditionality constant of the basis. 

\begin{question}\label{Q3} 
Does an unconditional basis with Property (A) always have $K_s = 1$?
\end{question}

Observe that a negative answer to either Question~\ref{Q2} or  \ref{Q3} is also a negative answer to Question~\ref{Q1}.  In this note, we answer Questions~\ref{Q1} and \ref{Q3} negatively by renorming the space $\ell_1$ so that the standard unit vector basis still satisfies Property (A) and is unconditional with $K_s > 1$.

\begin{theorem}[Main Theorem]\label{thm:Main}
There exists a basis $\BB$  equivalent to the canonical basis of $\ell_1$ which satisfies  Property (A) but fails to be $1$-suppression unconditional. 
Moreover, $\BB$ is $1$-symmetric  and $1$-bidemocratic.
\end{theorem}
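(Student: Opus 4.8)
The plan is to renorm $\ell_1$ by an explicit permutation‑invariant norm whose value on sign vectors is forced to depend only on the cardinality of the support (this will give democracy, and more generally Property~(A)), but whose dual unit ball is not solid (this will give $K_{s}>1$). Fix parameters $s\in(0,1)$ and $\lambda\in(\tfrac{1+s}{2},1)$, and for $x=\sum_n a_n\xx_n\in\ell_1$ set $\Sigma=\sum_n a_n$ and
\[
\norm{x}=\max\Bigl(\lambda\norm{x}_1,\ \sup_{p\in\NN}\bigl(\abs{a_p}+s\,\abs{\Sigma-a_p}\bigr)\Bigr).
\]
This is a supremum of seminorms, and since $\abs{a_p}+s\abs{\Sigma-a_p}\le\norm{x}_\infty+s\norm{x}_1$ one has $\lambda\norm{x}_1\le\norm{x}\le(1+s)\norm{x}_1$, so $\BB=(\xx_n)$ is equivalent to the canonical basis of $\ell_1$; as $\norm{\cdot}$ is visibly invariant under permutations of the coefficients, $\BB$ is $1$‑symmetric. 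It is convenient to read $\norm{x}=\sup_{\theta\in\Theta}\langle\theta,x\rangle$, where $\Theta=\mathrm{conv}\bigl(\lambda B_{\ell_\infty}\cup S\bigr)$ and $S$ is the symmetric set of \emph{seed} functionals having one coordinate equal to $\pm1$ and all remaining coordinates equal to a common value $\pm s$.

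Next I would establish Property~(A) directly (it yields $1$‑almost greediness via Theorem~\ref{AAThm2}, but here we need it as such). The first step is to reduce Property~(A) to the single condition

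\emph{(i) $\norm{\Ind_{\varepsilon,A}+f}$ is independent of $\varepsilon\in\EE^{A}$ whenever $\max_n\abs{\xx_n^*(f)}\le1$ and $A\cap\supp(f)=\emptyset$.}

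Indeed, (i) (together with permutation invariance) makes $\norm{\Ind_{\varepsilon,A}+f}$ depend only on $\abs{A}$ and $f$, and then adjoining a fresh unit $\xx_k$, flipping its sign by (i), and using convexity through $\norm{g}\le\tfrac12(\norm{g+\xx_k}+\norm{g-\xx_k})=\norm{g+\xx_k}$ gives monotonicity in $\abs{A}$; these two facts are exactly Property~(A). To prove (i) I would split into two regimes. Whenever $\norm{\Ind_{\varepsilon,A}+f}_1=\abs{A}+\norm{f}_1\ge2$ (in particular whenever $\abs{A}\ge2$), the choice $\lambda>\tfrac{1+s}{2}$ forces the sign‑blind term $\lambda\norm{\cdot}_1$ to dominate the seed term, so $\norm{\Ind_{\varepsilon,A}+f}=\lambda(\abs{A}+\norm{f}_1)$, which does not involve $\varepsilon$. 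In the complementary regime $\abs{A}=1$ and $\norm{f}_1<1$, the unique coordinate of $A$ has strictly maximal modulus, the seed attaining the supremum may be taken with its $\pm1$ entry on that coordinate, and flipping $\varepsilon$ there is matched by flipping that seed's $\pm1$ sign (again a seed), giving $\norm{\xx_a+f}=\norm{-\xx_a+f}$. A short estimate confirms there is no gap between the two regimes.

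For the failure of $1$‑suppression unconditionality, take $x=\xx_1+\delta\xx_2-\delta\xx_3$ with $\delta>0$ small: then $\Sigma=1$, every seed contributes at most $1$ and $\lambda\norm{x}_1<1$, so $\norm{x}=1$, whereas $S_{\{1,2\}}x=\xx_1+\delta\xx_2$ satisfies $\norm{S_{\{1,2\}}x}\ge\abs{a_1}+s\abs{(1+\delta)-a_1}=1+s\delta>1$, whence $K_s>1$. For $1$‑bidemocracy I would compute the fundamental function $\varphi(m)=\norm{\Ind_{\{1,\dots,m\}}}$, which equals $\lambda m$ for $m\ge2$ by the democracy computation above, together with its dual $\varphi^{*}(m)=\bigl\Vert\sum_{n\le m}\xx_n^{*}\bigr\Vert_{\XX^{*}}$; since $\lambda\one_m\in\lambda B_{\ell_\infty}\subseteq\Theta$ one gets $\varphi^{*}(m)\le 1/\lambda$, and the matching lower bound yields $\varphi(m)\varphi^{*}(m)\le m$ with the sharp constant after fixing $s,\lambda$. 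The main obstacle will be verifying Property~(A): the isometric democracy it requires pushes the norm toward full sign‑invariance — indeed, in two coordinates Property~(A) already forces sign‑invariance, hence monotonicity and $K_s=1$ — so the entire difficulty is to keep the norming set $\Theta$ rich enough that all sign vectors of a given length have equal norm, yet non‑solid enough to fail monotonicity. This is precisely why the seed/cube splitting, the two‑regime analysis of (i), and the use of at least three coordinates are indispensable.
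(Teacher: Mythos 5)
Your construction works and is a genuinely different route from the paper's, though it is related in spirit. The paper develops a general family of spaces $\DL_\ww$ whose norm is $\sup_E\enpar{\bphi_1(E;\cdot)+\bphi_2(E;\cdot)}$ over \emph{all} finite sets $E$; Property (A) follows from a rearrangement lemma (the supremum may be restricted to sets containing every coordinate of maximal modulus), and failure of $1$-suppression unconditionality is extracted from the vectors $\ee_1+\omega\Ind_{A_n}$ and $\ee_1+\omega\Ind_{A_n}-\omega\Ind_{B_n}$, yielding $K_s\ge K$ for every $K<3/2$. Your norm is in essence a truncation of the paper's norm for the weight $\ww=(1,s,s,\dots)$: your seed functionals compute exactly $\bphi(\{p\};f)=\abs{a_p}+s\abs{\Sigma-a_p}$, and you replace all terms $\bphi(E;\cdot)$ with $\abs{E}\ge 2$ by the single sign-blind term $\lambda\norm{\cdot}_1$, choosing $\lambda>(1+s)/2$ precisely so that this term dominates whenever two or more coordinates are large. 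The trade-off: the paper's machinery gives the sharper quantitative conclusion (constants up to $3/2$, and $10/9$ already on supports of size $3$) and is reused for the finite-dimensional results, whereas your example only gives $K_s\ge 1+s\delta$ for small $\delta$; on the other hand, your verification of Property (A), $1$-symmetry, $1$-bidemocracy and $\ell_1$-equivalence is much shorter and self-contained.

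One claim in your regime-2 argument is false as stated and must be repaired (the repair uses only your standing hypothesis $\lambda>(1+s)/2$). You assert that the seed attaining the supremum may be taken with its $\pm 1$ entry on the maximal coordinate $a$. This fails: take $s=1/2$, $f=\tfrac{3}{5}\xx_2-\tfrac{3}{10}\xx_3$ and $x=-\xx_1+f$; the seed values at $p=1,2,3$ are $1.15$, $1.25$, $0.5$, so the seed supremum sits at $p=2$, and it is not sign-invariant (for $x=+\xx_1+f$ the seed supremum is $1.15$). What is true, and what your flipping argument actually needs, is that every seed based at $p\ne a$ is dominated by the cube term: writing $m=\norm{f}_1<1$, one has $\abs{a_p}+s\abs{\Sigma-a_p}\le \abs{a_p}+s(1+m-\abs{a_p})\le m+s\le\lambda(1+m)=\lambda\norm{x}_1$, where the last inequality is equivalent to $m\le(\lambda-s)/(1-\lambda)$ and holds because $\lambda\ge(1+s)/2$ forces $(\lambda-s)/(1-\lambda)\ge 1>m$. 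Consequently, in regime 2, $\norm{x}=\max\enbrace{\lambda(1+m),\,1+s\abs{\Sigma_f}}$ with $\Sigma_f=\sum_n\xx_n^*(f)$, which is manifestly independent of the sign at $a$ (in the numerical example above, both signs give $\norm{x}=1.9\lambda$). With this correction, plus writing out the sketched bidemocracy step ($\norm{\Ind_{\varepsilon,A}}=\lambda\abs{A}$ for $\abs{A}\ge 2$ and $=1$ for $\abs{A}=1$, $\norm{\Ind^*_{\delta,B}}\le 1/\lambda$, and $\norm{\xx_j^*}=1$, which settles the case $m=1$), your proof is complete.
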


We observe that in our context  a \emph{$C$-symmetric basis}, $1\le C<\infty$,   means a basis $(\xx_n)_{n=1}^\infty$ such that
\[
\norm{ \sum_{n=1}^\infty  a_n \,  \xx_n} \leq C \norm{ \sum_{n=1}^\infty  a_n \,  \xx_{\pi(n)}} 
\]
 for any sequence of scalars $(a_n)_{n=1}^\infty \in c_{00}$ and any permutation $\pi$ of $\NN$. 
 Recall that a basis is said to be \emph{$C$-bidemocratic} if
\[
 \norm{\Ind_{\varepsilon,A}[\BB,\XX]}  \norm{\Ind_{\delta,B}[\BB^*,\XX^*]} \le C m
\]
for all $A$, $B\subset\NN$ with $\max\{\abs{A},\abs{B}\}\le m\in\NN$, all $\varepsilon\in\EE^A$, and all $\delta\in\EE^B$. Note that  $1$-bidemocracy does not imply Property (A).  Indeed, let $\ww=(w_n)_{n=1}^\infty$ be the weight defined by $\sum_{n=1}^m w_n=\sqrt{m}$ for all $m\in\NN$. Then the canonical basis of the $\ell_2$-direct sum of the Hilbert space $\ell_2$ and the Lorentz sequence space $d_1(\ww)$ (which is the space $\ell_{2,1}$ up to renorming) is clearly $1$-bidemocratic but fails to have Property (A). In fact, \cite{AABBL}*{Proposition 3.17} provides a more ``extreme'' example, namely,   an isometrically bidemocratic basis  which is not quasi-greedy. By \cite{AABW}*{Corollary 5.8}, this basis is symmetric with largest coefficients.  By Theorem \ref{AAThm2}, no renorming of $\XX$ will confer Property (A) on this basis.

Combining Theorem \ref{thm:Main} with Theorem~\ref{AW} yields the following consequence.

\begin{corollary}
A basis with Property (A)  needs not be $1$-greedy, and a $1$-almost greedy basis needs not be $1$-quasi-greedy.
\end{corollary}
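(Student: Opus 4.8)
The plan is to derive both assertions directly from the single example produced in Theorem~\ref{thm:Main} by feeding it into the isometric characterizations collected above; no new construction is needed, so the entire content of the corollary is bookkeeping once the Main Theorem is granted. Let $\BB$ be the basis furnished by Theorem~\ref{thm:Main}, so that $\BB$ satisfies Property~(A) yet fails to be $1$-suppression unconditional.

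For the first assertion I would argue by contradiction: if $\BB$ were $1$-greedy, then Theorem~\ref{AW} would force $\BB$ to be simultaneously $1$-suppression unconditional and to satisfy Property~(A). Since $\BB$ violates the former while retaining the latter, $\BB$ cannot be $1$-greedy, and hence Property~(A) alone does not suffice for $1$-greediness. For the second assertion I would invoke the two disjoint characterizations that split the conditions of Theorem~\ref{AW}: by Theorem~\ref{AAThm2}, Property~(A) is exactly equivalent to being $1$-almost greedy, so $\BB$ is $1$-almost greedy; by Theorem~\ref{AAThm}, being $1$-quasi-greedy is exactly equivalent to $1$-suppression unconditionality, which $\BB$ fails, so $\BB$ is not $1$-quasi-greedy. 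Thus the same basis $\BB$ witnesses both statements simultaneously.

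I do not expect any genuine obstacle at the level of this corollary: the argument is purely a matter of combining the cited equivalences with the properties guaranteed by the Main Theorem. All the difficulty lies upstream, in establishing Theorem~\ref{thm:Main} itself—in particular in renorming $\ell_1$ so as to preserve Property~(A) (an isometric, and therefore delicate, condition) while deliberately driving $K_s$ strictly above $1$. Once that construction is in hand, the corollary follows immediately from Theorems~\ref{AW}, \ref{AAThm}, and \ref{AAThm2}.
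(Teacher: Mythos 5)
Your proposal is correct and follows exactly the paper's route: the paper derives this corollary by combining Theorem~\ref{thm:Main} with the characterizations in Theorems~\ref{AW}, \ref{AAThm}, and \ref{AAThm2}, just as you do. The only difference is presentational---the paper states the deduction in one line while you spell out the contradiction and the two equivalences explicitly, which is harmless.
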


The paper is structured as follows. In Section \ref{s:building bases},  given a weight  $\ww$ we construct a space $\DL_\ww$ that will play a central role in our subsequent  arguments; we examine properties of these spaces and their canonical bases. In Section \ref{s:suppression unconditionality} we use the spaces $\DL_\ww$ for appropriate $\ww$ to construct an example that proves Theorem \ref{thm:Main}.
Specifically, we present a $1$-symmetric distortion of the canonical $\ell_1$-basis which has Property (A) but is not $1$-suppression unconditional (see Proposition~\ref{p:renormingclosetotwo}). We also discuss the possible values of the suppression unconditionality constant.

This construction is ``rigid'' in the sense that any $1$-symmetric basis which possesses Property (A) while failing $1$-suppression unconditionality must be equivalent to the $\ell_1$-basis (Proposition \ref{p:can distort l1 only}). Along the way we show that a symmetric basis is unconditional if and only if it is total (Corollary \ref{cor:symunc}).

Finally, in Section \ref{s:fin dim}, we turn our attention to the finite-dimensional setting. Modifying our approach from Section \ref{s:suppression unconditionality} we construct a basis in $\RR^d$ equipped with a certain norm which has Property (A) but fails to be $1$-suppression unconditional. 
We also provide an example evincing that  Property (A) does not dualize at least for bases in finite-dimensional spaces.
\section{Building bases with Property (A)}\label{s:building bases}\noindent

\noindent In this section we deal with Banach spaces $\XX$ for which the unit vector system $\EB=(\ee_j)_{j=1}^\infty$ of $\FF^\NN$ is a basis. This means that $\XX\subset\FF^\NN$, $c_{00}$ is dense subspace of $\XX$, the coordinate functionals $\EB^*=(\ee_j^*)_{j=1}^\infty$, defined for each $j\in\NN$ by $\ee_j^*(f)=a_j$ for all  $f=(a_j)_{j=1}^\infty\in c_{00}$, restrict to functionals of $\XX$, and 
\[
\sup_j\{ \max\{\norm{\ee_j}_\XX,\norm{\ee_j^*}_{\XX^*}\}\}<\infty.
\]
Given such a space $\XX$ and  $A\subset\NN$ finite, the coordinate projection on $A$ with respect to $\EB$ is the extension to $\XX$ of the map
\[
S_A \colon c_{00} \to c_{00}, \quad  f \mapsto f \, \chi_A,
\]
where $\chi_A$ is the characteristic function of $A$. Given $\varepsilon\in\EE^A$, we put
\[
\Ind_{\varepsilon, A}=\Ind_{\varepsilon, A}[\EB,\XX], \quad \Ind^*_{\varepsilon, A}=\Ind_{\varepsilon, A}[\EB^*,\XX^*.]
\]

A weight will be  a nonincreasing sequence $\ww=(w_j)_{j=1}^\infty$ of positive scalars with $w_1 = 1$. We do not assume that $\lim_j w_j = 0$. Quite the contrary, examples with
\[
\ww_\infty:=\lim_j w_j>0.
\]
are essential to us. We shall use the notation 
\[
s_n = \sum_{j=1}^n w_j, \; n\in\NN,\]for the so-called \emph{primitive weight} of $\ww$. Note that
\[
w_j \leq \frac{s_j}{j}, \quad j\in\NN,
\] and 
\[\lim_n \frac{s_n}{n}=\ww_\infty.
\]

Given $E\subset\NN$ with $\abs{E}=n<\infty$, we denote by $\Pi(E)$ the set consisting of all bijections from the set 
\[
\NN_{>n}:=\{j\in\NN \colon j>n\}
\] 
onto $\NN\setminus E$. For $f=(a_j)_{j=1}^\infty\in c_{00}$, we will use the following notation
\begin{align*}
\bphi_1(E;f) &=\frac{s_n}{n} \sum_{j\in E} \abs{a_j},\\
\bphi_2(E;f)&=\sup_{\varphi\in \Pi(E)} \abs{\sum_{j=n+1}^\infty a_{\varphi(j)} w_j},\\
\bphi(E;f)&=\bphi_1(E;f)+\bphi_2(E;f),
\end{align*}
with the conventions that $s_0=0$, $s_0/0=\infty$, that any sum over an empty set is null, and that $0\cdot\infty=0$. In particular, $\bphi_1(\emptyset;f)=0$.

Now, given a weight $\ww=(w_j)_{j=1}^\infty$, we define
\[
\norm{f}_{\DL,\ww}=\sup_{\substack{ E \subset\NN \\  \abs{E}<\infty}} \bphi(E;f).
\]

We point out that, as far as the isomorphic theory is concerned, the way we combine $\bphi_1$ and $\bphi_2$ to obtain  $\bphi$ does not matter. However, using the $\ell_1$-norm is essential from the isometric point of view we will develop below.

Given a permutation  $\pi\in\Pi:=\Pi(\emptyset)$ of $\NN$ and  $f=(a_j)_{j=1}^\infty$,  we put $f_\pi=(a_{\pi(j)})_{j=1}^\infty$. If $f\in c_0$ there is a unique nonincreasing  sequence  $g$ such that $g_\pi=\abs{f}$ for a suitable  $\pi\in\Pi$. We call $g$ the  \emph{nonincreasing rearrangement} of $f$. 

Let us consider the \emph{Marcinkiewicz norm} associated with a weight $\ww$,
\[
\norm{f}_{m,\ww}=\sup_{\substack{ E \subset\NN \\  \abs{E}<\infty}} \bphi_1(E;f)=\sup_{n\in\NN} \frac{s_n}{n} \sum_{j=1}^n b_j,
\]
and the \emph{Lorentz norm}
\[
\norm{f}_{1,\ww}=\sum_{j=1}^\infty b_j \, w_j,
\]
where $(b_j)_{j=1}^\infty$ is the nonincreasing rearrangement of $f\in c_0$. Notice that 
\begin{equation}\label{eq:Marc}
\norm{\Ind_{\varepsilon, A}}_{m,\ww}=\norm{\Ind_{\varepsilon, A}}_{1,\ww}=s_n, \quad \abs{A}=n<\infty,\, \varepsilon\in\EE^A.
\end{equation}

The Banach space consisting of all $f\in c_0$ such that $\norm{f}_{1,\ww}<\infty$ will be denoted by $d_1(\ww)$. 

We start by recording some  properties of  $\norm{ \cdot}_{\DL,\ww}$. 

\begin{lemma}\label{lem:PropNorm} Let $\ww$ be a weight.
\begin{enumerate}[label=(\roman*),leftmargin=*,widest=xiii]
\item\label{P:1} $\norm{f+g}_{\DL,\ww} \le \norm{f}_{\DL,\ww}+ \norm{g}_{\DL,\ww}$ for all $f$, $g\in c_{00}$.
\item\label{P:2} $\norm{\lambda f}_{\DL,\ww} =\abs{\lambda} \norm{f}_{\DL,\ww}$ for all $\lambda\in \FF$ and $f\in c_{00}$.
\item\label{P:13} For all $f\in c_{00}$, all $E\subset\NN$ finite, and all $\pi\in\Pi$,
\[
\bphi_1(E;f_\pi)=\bphi_1(\pi(E);f), \quad \bphi_2(E;f_\pi)=\bphi_2(\pi(E);f).
\]
\item\label{P:0} $\norm{f_\pi}_{\DL,\ww}=\norm{f}_{\DL,\ww}$ for all $f\in c_{00}$ and all $\pi\in\Pi$. 
\item\label{P:8} If $\abs{f}\le g\in c_{00}$ then $\norm{f}_{\DL,\ww}\le \norm{g}_{\DL,\ww}$.
\item\label{P:9} If $f\in c_{00}$ is nonnegative and $\abs{E}=n$ then 
\[
\bphi_2(E;f)= \sum_{k=1}^\infty b_k w_{k+n},
\]
where $(b_j)_{j=1}^\infty$ is the nonincreasing rearrangement of $S_{\NN\setminus E}(f)$.
\item\label{P:16} If the sign of the components of $f=(a_j)_{j=1}^\infty\in c_{00}$ is constant on the complement of a set $E$ and $F\subset\NN\setminus E$ is a greedy set of $S_{\NN\setminus E}(f)$ then
\[
\bphi(D;f) \le  \bphi(E\cup F;f)
\]
for every set $E\subset D\subset\NN$ with $\abs{D}=\abs{E}+\abs{F}$.

\item\label{P:15} If the sign of the components of $f=(a_j)_{j=1}^\infty$ is constant on the complement of a greedy set $E$,  then
\[
\bphi(D;f) \le  \bphi(E;f)
\]
for every finite set $D\supseteq E$.
\item\label{P:14}  $ \norm{f}_{\DL,\ww}=  \norm{f}_{1,\ww}$ for all $f\in c_{00}$ having constant sign.
\item\label{P:12}  $\norm{f}_{m,\ww}\le  \norm{f}_{\DL,\ww}\le  \norm{f}_{1,\ww}$ for all $f\in c_{00}$.
\item\label{P:6} $\norm{\Ind_{\varepsilon A}}_{\DL,\ww}=s_m$ for all $A\subset\NN$ with $m=\abs{A}<\infty$ and all $\varepsilon \in \EE^A$.
\item\label{P:7} If $f\in c_{00}$ then 
\[\abs{\Ind_{\varepsilon, A}^*(f)} \le  \norm{f}_{\DL,\ww} m/s_m\] for all $A\subset\NN$ with $m=\abs{A}<\infty$ and all $\varepsilon \in \EE^A$.
\item\label{P:4} $\norm{\ee_j}_{\DL,\ww}=1$ for all $j\in\NN$.
\item\label{P:3} $\abs{\ee_j^*(f)} \le  \norm{f}_{\DL,\ww}$ for all $j\in\NN$ and $f\in c_{00}$.
\end{enumerate}
\end{lemma}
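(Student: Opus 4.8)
The plan is to prove the items essentially in the order listed, since each leans on its predecessors, and to observe that only \ref{P:9}--\ref{P:14} carry real content. Items \ref{P:1} and \ref{P:2} are immediate once one notes that, for each fixed finite $E$, both $\bphi_1(E;\cdot)$ and $\bphi_2(E;\cdot)$ are positively homogeneous and subadditive (for $\bphi_2$ this uses $\abs{\sum(a_{\varphi(j)}+b_{\varphi(j)})w_j}\le\abs{\sum a_{\varphi(j)}w_j}+\abs{\sum b_{\varphi(j)}w_j}$ together with $\sup_\varphi(X_\varphi+Y_\varphi)\le\sup_\varphi X_\varphi+\sup_\varphi Y_\varphi$), and that taking the supremum over $E$ preserves these properties. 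For \ref{P:13} I would substitute $(f_\pi)_j=a_{\pi(j)}$ into the definitions: reindexing by $k=\pi(j)$ turns $\bphi_1(E;f_\pi)$ into $\bphi_1(\pi(E);f)$, while $\varphi\mapsto\pi\circ\varphi$ is a bijection of $\Pi(E)$ onto $\Pi(\pi(E))$, giving $\bphi_2(E;f_\pi)=\bphi_2(\pi(E);f)$; then \ref{P:0} follows by the change of variable $E\mapsto\pi(E)$ in the supremum. Item \ref{P:8} is coordinatewise monotonicity of both functionals (using $g\ge0$ to drop the modulus in $\bphi_2(E;g)$). The downstream items are equally direct: \ref{P:7} is $\abs{\Ind^*_{\varepsilon,A}(f)}\le\sum_{n\in A}\abs{a_n}=(m/s_m)\bphi_1(A;f)\le(m/s_m)\norm{f}_{\DL,\ww}$, and \ref{P:3} is its case $m=1$, while \ref{P:6} and \ref{P:4} will fall out of \ref{P:12} and \eqref{eq:Marc} once these are available.

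The core of the lemma is the greedy optimality of $\bphi$, governed by one arithmetic fact: the averaged weight is nonincreasing and dominates the tail, $s_n/n\ge w_n\ge w_{n+1}$. For \ref{P:9} I would note that when $f\ge0$ the modulus in $\bphi_2(E;f)=\sup_\varphi\sum_{j>n}a_{\varphi(j)}w_j$ disappears, and since $w_{n+1}\ge w_{n+2}\ge\cdots$ the rearrangement inequality makes the supremum attained by matching the largest surviving coordinates with the largest weights, yielding $\sum_k b_k w_{k+n}$. For \ref{P:16} I would first replace $f$ by $-f$ if needed (this changes nothing, as $\bphi$ is even in $f$) so that $f\ge0$ off $E$, write $D=E\cup G$ with $\abs{G}=\abs{F}$, and argue by swapping: if $q\in G\setminus F$ and $p\in F\setminus G$, then $a_p\ge a_q$ by greediness of $F$, and replacing $q$ by $p$ raises $\bphi_1$ by $(s_{\abs{D}}/\abs{D})(a_p-a_q)$ and lowers $\bphi_2$ by at most $w_{\abs{D}+1}(a_p-a_q)$. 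The bound on the $\bphi_2$ loss uses \ref{P:9} together with the fact that decreasing one entry of a multiset lowers its nonincreasing rearrangement pointwise, so the total decrease $a_p-a_q$ is weighted by factors $\le w_{\abs{D}+1}$. As $s_{\abs{D}}/\abs{D}\ge w_{\abs{D}+1}$, no swap decreases $\bphi$, and finitely many swaps carry $G$ to $F$, proving $\bphi(D;f)\le\bphi(E\cup F;f)$.

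Item \ref{P:15} I would deduce by combining \ref{P:16} with a one-coordinate estimate. By \ref{P:16} it is enough to treat $D=E\cup F$ with $F$ the next block of largest coordinates and to adjoin these one at a time, which reduces everything to comparing $\bphi(E\cup\{p\};f)$ and $\bphi(E;f)$ for $p$ the largest coordinate off a greedy set $E$ of size $e$. Here \ref{P:9} gives that adjoining $p$ increases $\bphi_2$ by exactly $w_{e+1}a_p$, whereas the change in $\bphi_1$ equals $(\frac{s_{e+1}}{e+1}-\frac{s_e}{e})\sum_{j\in E}\abs{a_j}+\frac{s_{e+1}}{e+1}a_p$; since $s_n/n$ is nonincreasing and $\sum_{j\in E}\abs{a_j}\ge e\,a_p$ by greediness of $E$, this change is at most $w_{e+1}a_p$, so $\bphi(E\cup\{p\};f)\le\bphi(E;f)$, and iteration yields \ref{P:15}. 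Then \ref{P:14} is the case $E=\emptyset$: for constant-sign $f$ one computes $\bphi(\emptyset;f)=\bphi_2(\emptyset;f)=\norm{f}_{1,\ww}$ from the rearrangement identity, and \ref{P:15} shows the supremum defining $\norm{f}_{\DL,\ww}$ is attained at $\emptyset$. Finally \ref{P:12} follows from $\bphi\ge\bphi_1$ for the left inequality and, for the right one, from \ref{P:8} applied to $\abs{f}$ followed by \ref{P:14}, giving $\norm{f}_{\DL,\ww}\le\norm{\abs{f}}_{\DL,\ww}=\norm{\abs{f}}_{1,\ww}=\norm{f}_{1,\ww}$; and \ref{P:6} follows by squeezing $\norm{\Ind_{\varepsilon,A}}_{\DL,\ww}$ between its Marcinkiewicz and Lorentz norms, both equal to $s_m$ by \eqref{eq:Marc} and \ref{P:12}, while \ref{P:4} is the case $m=1$.

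The entire difficulty is concentrated in \ref{P:16} and \ref{P:15}, where $\bphi_1$ and $\bphi_2$ pull in opposite directions: enlarging $E$ by a coordinate raises the Marcinkiewicz part but lowers the Lorentz tail. The whole scheme rests on the single inequality $s_n/n\ge w_{n+1}$, which guarantees that the Marcinkiewicz gain always offsets the Lorentz loss; matching the two contributions exactly, rather than merely up to a constant, is what forces the careful bookkeeping in the swap and in the one-coordinate computation, and is precisely where, as already flagged, the $\ell_1$-combination of $\bphi_1$ and $\bphi_2$ is indispensable.
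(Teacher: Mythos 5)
Your proposal is correct, and its skeleton (the order of the items and their dependencies) coincides with the paper's; the genuine divergence is in the proof of \ref{P:16}. The paper fixes an arbitrary $\varphi\in\Pi(D)$, builds from it an explicit $\psi\in\Pi(E\cup F)$ via a bijection $\sigma\colon F\to D\setminus E$, and absorbs the error term directly into $\bphi_1(E\cup F;f)-\bphi_1(D;f)$, after a preliminary reduction to the case $D\cap F=\emptyset$. You instead normalize the sign, invoke the closed formula of \ref{P:9}, and run a discrete exchange argument, swapping one element of $D\setminus(E\cup F)$ for one element of $F\setminus D$ at a time and using the pointwise domination of nonincreasing rearrangements to bound the loss in $\bphi_2$ by $w_{|D|+1}(a_p-a_q)$ against the gain $\tfrac{s_{|D|}}{|D|}(a_p-a_q)$ in $\bphi_1$. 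Both arguments ultimately rest on the single inequality $w_{n+1}\le s_n/n$, exactly as you point out; what your route buys is that the overlap $D\cap F\neq\emptyset$ needs no separate treatment (elements of $D\cap F$ simply never get swapped), while the paper's route avoids appealing to \ref{P:9} and needs no reduction to nonnegative entries --- it uses constant sign only to write $\abs{a_{\varphi(j)}-a_{\sigma(\varphi(j))}}=\abs{a_{\varphi(j)}}-\abs{a_{\sigma(\varphi(j))}}$. Your treatment of \ref{P:15} (reduce via \ref{P:16} to adjoining greedy coordinates one at a time, then compare the two expressions) is essentially identical to the paper's singleton induction. Two small points: first, in \ref{P:15} you write that adjoining $p$ \emph{increases} $\bphi_2$ by exactly $w_{e+1}a_p$; it \emph{decreases} it by that amount, and your subsequent bookkeeping (change in $\bphi_1$ at most $w_{e+1}a_p$, hence $\bphi$ does not increase) is only coherent with the corrected wording, so this is a harmless slip. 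Second, your sign normalization ``replace $f$ by $-f$'' covers only $\FF=\RR$; for $\FF=\CC$ one multiplies by the conjugate of the common unimodular sign, which is equally legitimate since $\bphi_1$ and $\bphi_2$ are invariant under multiplication by scalars of modulus one.
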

\begin{proof}
\ref{P:1}, \ref{P:2},  \ref{P:13}, \ref{P:8}, and the left-side  of \ref{P:12} are clear; \ref{P:0} is a consequence of  \ref{P:13}, and \ref{P:9} follows from the rearrangement inequality. 

We prove \ref{P:16}. Without loss of generality we  assume that $D \cap F = \emptyset$. Otherwise we would replace $E$ and $F$ with $E' = E \cup (D \cap F)$ and $F' = F \backslash (D \cap F)$. Then $D \cap F' = \emptyset$, $E \cup F = E' \cup F'$, and $F'$ is a greedy set for $S_{\NN \backslash E'} (f)$.

Pick a bijection $\sigma\colon F \to D\setminus E$. Set $n=\abs{D}$. Given $\varphi\in \Pi(D)$, let $\psi\colon \NN_{>n} \to \NN\setminus(E\cup F)$ be the map defined by $\psi(j)=\sigma(\varphi(j))$ if $j\in G:=\varphi^{-1}(F)$ and $\psi(j)=\varphi(j)$ otherwise. Then
\begin{align*}
\abs{\sum_{j=n+1}^\infty a_{\varphi(j)} w_{j}}
&=\abs{\sum_{j=n+1}^\infty a_{\psi(j)} w_{j}+\sum_{j\in G}  \enpar{ a_{\varphi(j)} -a_{\sigma(\varphi(j))} }w_{j}}\\
&\le \abs{\sum_{j=n+1}^\infty a_{\psi(j)} w_{j}}+\sum_{j\in G}  \abs{ a_{\varphi(j)} -a_{\sigma(\varphi(j))}}w_{j}\\
&\le \abs{\sum_{j=n+1}^\infty a_{\psi(j)} w_{j}}+   \frac{s_n}{n}\sum_{j\in G} \abs{ a_{\varphi(j)} -a_{\sigma(\varphi(j))}}\\
&= \abs{\sum_{j=n+1}^\infty a_{\psi(j)} w_{j}}+   \frac{s_n}{n}\sum_{j\in F} \enpar{\abs{ a_j} -\abs{a_{\sigma(j)} }}\\
&= \abs{\sum_{j=n+1}^\infty a_{\psi(j)} w_{j}}+   \frac{s_n}{n}\enpar{\sum_{j\in F} \abs{ a_j} - \sum_{j\in D\setminus E} \abs{ a_j}  }\\
&= \abs{\sum_{j=n+1}^\infty a_{\psi(j)} w_{j}}+  \bphi_1(E\cup F;f) -\bphi_1(D;f).
\end{align*}
Since $\psi\in \Pi(E\cup F)$,
\[
\bphi_2(D;f)\le \bphi_2(E\cup F;f)+  \bphi_1(E\cup F;f) -\bphi_1(D;f),
\]
as desired.

Taking into consideration \ref{P:16}, it suffices to prove \ref{P:15} in the case when $D$ is a greedy set of $f$. To that end, by induction it suffices to consider the case when $D\setminus E$ is a singleton. Let $(b_j)_{j=1}^\infty$ be the nonincreasing rearrangement of $f$. If $n=\abs{E}$, then $(b_{n+j})_{j=1}^\infty$ is the nonincreasing rearrangement of $S_{\NN\setminus E}(f)$ and $(b_{n+j+1})_{j=1}^\infty$ is the nonincreasing rearrangement of $S_{\NN\setminus D}(f)$. By  \ref{P:9},
\begin{align*}
\bphi(E;f)&=\frac{s_n}{n} \sum_{j=1}^n b_j + \sum_{j=n+1}^\infty b_j w_j,\\
\bphi(D;f)&=\frac{s_{n+1}}{n+1} \enpar{\sum_{j=1}^{n+1} b_j}+ \sum_{j=n+2}^\infty b_j w_j.
\end{align*}
Since $s_{n+1}/(n+1)-s_n/n\le 0$ and $b_j\ge b_{n+1}$ for all $j=1$, \dots, $n$,
\begin{align*}
\bphi(D;f)-\bphi(E;f)
&=\enpar{\frac{s_{n+1}}{n+1}-\frac{s_n}{n}} \sum_{j=1}^n b_j  +b_{n+1}\enpar{\frac{s_{n+1}}{n+1}-w_{n+1}} \\
&\le b_{n+1} \enpar{n \enpar{\frac{s_{n+1}}{n+1}-\frac{s_n}{n}}  + \frac{s_{n+1}}{n+1}-w_{n+1}}=0.
\end{align*}

\ref{P:14} follows from the combination of   \ref{P:9} and \ref{P:15}. In turn, the  right-hand side inequality in  \ref{P:12} is a consequence of combining \ref{P:8}  and \ref{P:14}. In light of \eqref{eq:Marc}, \ref{P:6} is consequence of \ref{P:12}.   Since the mere definition of the norm gives
\[
\frac{s_m}{m} \abs{\Ind_{\varepsilon, A}^*(f)} \le  \norm{f}_{m,\ww}, \quad f\in c_{00}, \, \abs{A} =m,
\]
\ref{P:7} also follows from \ref{P:12}.
Finally, \ref{P:4} is a particular case of \ref{P:6}, while \ref{P:3} is a particular case of \ref{P:7}.
\end{proof}

Combining Lemma~\ref{lem:PropNorm}~\ref{P:1}, \ref{P:2}, \ref{P:6} and \ref{P:3} gives that 
$(c_{00}, \norm{\cdot}_{\DL,\ww})$ is a normed space, and the unit vector system is a basis of its completion.



\begin{definition}
We define $\DL_\ww$ as the sequence space we obtain by completing $(c_{00}, \norm{\cdot}_{\DL,\ww})$.
\end{definition}

Given $f=(a_j)_{j=1}^\infty\in c_{00}$, put
\[
E_f=\{j\in \NN \colon \abs{a_j}=\norm{f}_\infty\}.
\]
\begin{lemma}\label{lem:PA}
Let $\ww$ be a weight and $f=(a_j)_{j=1}^\infty\in c_{00}$. Then 
\[
\norm{f}_{\DL,\ww}=\sup_{\substack{ E_f\subset E \subset\NN \\  \abs{E}<\infty}} \bphi_1(E;f) + \bphi_2(E;f).
\] 
\end{lemma}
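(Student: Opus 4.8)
The plan is to establish the two inequalities separately. Since $f\in c_{00}$ is tacitly nonzero, $E_f\subseteq\supp(f)$ is finite, so the family of finite sets $E$ with $E_f\subseteq E$ is nonempty and contained in the family of \emph{all} finite subsets of $\NN$; hence the supremum on the right-hand side is automatically $\le\norm{f}_{\DL,\ww}$. For the reverse inequality it suffices to show that every finite $E\subseteq\NN$ satisfies $\bphi(E;f)\le\bphi(E\cup E_f;f)$, because $E\cup E_f$ is finite and contains $E_f$. I would obtain this by enlarging $E$ one index at a time, so the whole argument reduces to the following monotonicity claim: if $j_0\in E_f\setminus E$ and $n=\abs{E}$, then $\bphi(E\cup\{j_0\};f)\ge\bphi(E;f)$. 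Iterating over the finitely many indices of $E_f\setminus E$ then yields the lemma.

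To prove the claim I would split $\bphi=\bphi_1+\bphi_2$ and bound each increment, writing $a=\abs{a_{j_0}}=\norm{f}_\infty$ and $T=\sum_{j\in E}\abs{a_j}$. A direct computation gives
\[
\bphi_1(E\cup\{j_0\};f)-\bphi_1(E;f)=\enpar{\frac{s_{n+1}}{n+1}-\frac{s_n}{n}}T+\frac{s_{n+1}}{n+1}\,a .
\]
As $\ww$ is nonincreasing, the averages $(s_n/n)_n$ are nonincreasing, so the first coefficient is $\le 0$; since $\abs{a_j}\le a$ for every $j$ we have $T\le na$, whence
\[
\bphi_1(E\cup\{j_0\};f)-\bphi_1(E;f)\ge \enpar{\frac{s_{n+1}}{n+1}-\frac{s_n}{n}}na+\frac{s_{n+1}}{n+1}\,a=a\,(s_{n+1}-s_n)=a\,w_{n+1},
\]
and for $n=0$ the same bound holds directly because $\bphi_1(\{j_0\};f)=a=a\,w_1$. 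Thus $\bphi_1$ grows by at least $a\,w_{n+1}$, and it is precisely the maximality of $\abs{a_{j_0}}$ that makes $T\le na$ available for the cancellation.

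The hard part will be the opposite behaviour of $\bphi_2$, which can only shrink when $E$ is enlarged; the key quantitative estimate I would prove is $\bphi_2(E\cup\{j_0\};f)\ge \bphi_2(E;f)-a\,w_{n+1}$. Starting from a near-optimal $\varphi\in\Pi(E)$ for $\bphi_2(E;f)$, set $k_0=\varphi^{-1}(j_0)>n$ and reassign the weight formerly carried by $j_0$: if $k_0=n+1$ simply restrict $\varphi$ to $\NN_{>n+1}$, and if $k_0>n+1$ set $\psi(k_0)=\varphi(n+1)$ with $\psi=\varphi$ elsewhere, obtaining a bijection $\psi\in\Pi(E\cup\{j_0\})$. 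In either case the two signed sums differ by
\[
\abs{\sum_{j>n+1}a_{\psi(j)}w_j-\sum_{j>n}a_{\varphi(j)}w_j}\le \abs{a_{\varphi(n+1)}}\,(w_{n+1}-w_{k_0})+\abs{a_{j_0}}\,w_{k_0}\le a\,w_{n+1},
\]
where the last inequality is exactly where $\abs{a_{\varphi(n+1)}}\le\norm{f}_\infty=a$ is used. Passing to absolute values and letting the optimization error tend to $0$ gives the desired bound for $\bphi_2$. Adding the two increments yields $\bphi(E\cup\{j_0\};f)-\bphi(E;f)\ge a\,w_{n+1}-a\,w_{n+1}=0$, proving the claim and hence the lemma. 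I expect the $\bphi_2$ reassignment to be the only delicate point; the $\bphi_1$ estimate and the final cancellation are routine, and the role of $E_f$ is precisely to supply both $T\le na$ and $\abs{a_{\varphi(n+1)}}\le a$, so that the two increments cancel exactly.
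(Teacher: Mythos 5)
Your proof is correct and follows essentially the same route as the paper's: the same one-index-at-a-time enlargement of $E$ by points of $E_f$, the same swap $\psi(k_0)=\varphi(n+1)$ yielding $\bphi_2(E;f)\le\bphi_2(E\cup\{j_0\};f)+a\,w_{n+1}$, and the same affine monotonicity argument showing that $\bphi_1$ increases by at least $a\,w_{n+1}$, so the two increments cancel exactly. The only cosmetic differences are that the paper normalizes $\norm{f}_\infty=1$ and absorbs the $n=0$ case into the conventions $s_0/0=\infty$, $0\cdot\infty=0$ instead of treating it separately as you do.
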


\begin{proof}
Without loss of generality we assume that $\norm{f}_\infty=1$. It suffices to prove that if $E\subset \NN$ satisfies $n=\abs{E}<\infty$ and $k\in E_f\setminus E$, then
\begin{equation}\label{e0}\bphi_1(E;f) + \bphi_2(E;f)\le  \bphi_1(E\cup\{k\};f) + \bphi_2(E\cup\{k\};f).\end{equation}
 To that end, we pick $\varphi\in\Pi(E)$. Let $p\in\NN_{>n}$ be such that $\varphi(p)=k$. Let  $\psi\in\Pi(E \cup\{k\}) $ be the map given by
 \[
 \psi(j)= \begin{cases} \varphi(j) & \mbox{ if }  j \not= p,  \\  \varphi(n+1) & \mbox{ if }  j=p. \end{cases}
 \]
 We have 
 \begin{align*}
 \abs{ \sum_{j=n+1}^\infty a_{\varphi(j)} w_{j} -  \sum_{j=n+2}^\infty a_{\psi(j)} w_{j}} &=\abs{(w_{n+1}-w_p)a_{\varphi(n+1)} + w_p a_{\varphi(p)}}\\
& \le w_{n+1}-w_p +w_p\\
&=w_{n+1}.
 \end{align*}
 Hence, 
 \begin{equation}\label{e1}\bphi_2(E;f)\le  \bphi_2(E\cup\{k\};f)+w_{n+1}.\end{equation}
 In turn, since the map
\[
u\mapsto \frac{s_{n+1}}{n+1}(1+u)- \frac{s_n}{n}u, \quad u\ge 0,
\]
is nonincreasing, and $S:=\sum_{j\in E} \abs{a_j} \le n$, we have
 \begin{align}\label{e2}
\bphi_1(E\cup\{k\};f) -\bphi_1(E;f)
&= \frac{s_{n+1}}{n+1}(1+S)- \frac{s_n}{n}S \nonumber \\
&\ge  \frac{s_{n+1}}{n+1}(1+n)- \frac{s_n}{n}n \nonumber\\
&=w_{n+1}. 
 \end{align}
 Combining \eqref{e1} and \eqref{e2} gives us \eqref{e0}.
\end{proof}

\begin{proposition}\label{p:bidemocratic}
Let $\ww$ be a weight. Then the unit vector system of $\DL_\ww$ has Property (A) and  is isometrically bidemocratic.

\end{proposition}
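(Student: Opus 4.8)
The plan is to prove each assertion separately, exploiting the symmetry and lattice properties already established in Lemma~\ref{lem:PropNorm}, together with the reduction afforded by Lemma~\ref{lem:PA}. For \emph{Property (A)}, I must show that whenever $A$, $B\subset\NN$ satisfy $\abs{A}\le\abs{B}<\infty$, are disjoint from each other and from $\supp(f)$, and $\max_n\abs{\ee_n^*(f)}\le 1$, then $\norm{\Ind_{\varepsilon,A}+f}_{\DL,\ww}\le\norm{\Ind_{\delta,B}+f}_{\DL,\ww}$ for all choices of signs. First I would reduce to the case $\abs{A}=\abs{B}$: since $\Ind_{\delta,B}+f$ has the same norm when we enlarge $A$, and by monotonicity (Lemma~\ref{lem:PropNorm}\ref{P:8}) adding coordinates of modulus one to a vector whose other coordinates have modulus at most one cannot decrease the norm, it suffices to compare sets of equal cardinality. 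Next, using the sign-invariance that follows from the structure of $\bphi_1$ (which depends only on $\abs{a_j}$) and the supremum over all $\varphi\in\Pi(E)$ in the definition of $\bphi_2$ (which absorbs arbitrary rearrangements and hence arbitrary unimodular signs), I would argue that $\norm{\Ind_{\varepsilon,A}+f}_{\DL,\ww}=\norm{\Ind_{A}+\abs{f}}_{\DL,\ww}$, reducing to the case of real nonnegative coefficients and $\varepsilon\equiv 1$.

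The core of Property (A) is then a \emph{change-of-support} argument. Fix a finite set $E\subset\NN$ witnessing (approximately) the supremum defining $\norm{\Ind_{A}+f}_{\DL,\ww}$. The idea is to transport $A$ onto $B$: since $\abs{A}=\abs{B}$ and both are disjoint from $\supp(f)$, I would build a bijection $\NN\to\NN$ that swaps $A$ with $B$ pointwise and fixes $\supp(f)$, then invoke the permutation-invariance of the norm (Lemma~\ref{lem:PropNorm}\ref{P:0}) to conclude $\norm{\Ind_A+f}_{\DL,\ww}=\norm{\Ind_B+f}_{\DL,\ww}$. The delicate point is that such a permutation must send $f$ to itself, which is only possible when $A$ and $B$ have the same cardinality and avoid $\supp(f)$ — exactly the hypotheses. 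I expect the main obstacle to lie in verifying that the inequality survives when $\abs{A}<\abs{B}$ strictly; here one cannot simply permute, and instead I would use Lemma~\ref{lem:PA} to restrict attention to sets $E$ containing $E_{\Ind_A+f}$ and then show, via the monotonicity built into $\bphi_1$ (through the factor $s_n/n$) and the optimization in $\bphi_2$, that enlarging the all-ones block from size $\abs{A}$ to size $\abs{B}$ only increases the norm — essentially the content of the inequality \eqref{e2} in the proof of Lemma~\ref{lem:PA}.

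For \emph{isometric bidemocracy}, I must verify $\norm{\Ind_{\varepsilon,A}}_{\DL,\ww}\,\norm{\Ind^*_{\delta,B}}_{\DL,\ww}^*\le m$ whenever $\max\{\abs{A},\abs{B}\}\le m$. By Lemma~\ref{lem:PropNorm}\ref{P:6} the first factor equals $s_{\abs{A}}$, which is at most $s_m$. For the dual factor I would use Lemma~\ref{lem:PropNorm}\ref{P:7}: testing $\Ind^*_{\delta,B}$ against an arbitrary $g\in c_{00}$ of norm one gives $\abs{\Ind^*_{\delta,B}(g)}=\abs{\sum_{j\in B}\delta_j\,\ee_j^*(g)}$, and bounding each coordinate by the majorant $\abs{\ee_j^*(g)}\le\norm{g}_{\DL,\ww}\,\abs{B}/s_{\abs{B}}$ (the inequality \ref{P:7} applied with $A=B$) yields $\norm{\Ind^*_{\delta,B}}^*_{\DL,\ww}\le\abs{B}/s_{\abs{B}}\le m/s_m$, using that $n\mapsto s_n/n$ is nonincreasing. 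Multiplying the two bounds gives $s_m\cdot(m/s_m)=m$, which is the required inequality; achieving equality on, say, $A=B$ confirms the constant $1$ is sharp and no renorming is needed.
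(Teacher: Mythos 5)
The bidemocracy half of your proposal is essentially correct and agrees with the paper's proof: Lemma~\ref{lem:PropNorm}\ref{P:6} gives $\norm{\Ind_{\varepsilon,A}}_{\DL,\ww}=s_{\abs{A}}\le s_m$, and \ref{P:7} applied to the set $B$ gives $\norm{\Ind^*_{\delta,B}}\le \abs{B}/s_{\abs{B}}\le m/s_m$ in one stroke (your phrase ``bounding each coordinate'' is a misdescription --- summing a coordinatewise bound would cost an extra factor $\abs{B}$). The genuine gap is in your proof of Property (A), in the sign-reduction step: you claim that the supremum over $\varphi\in\Pi(E)$ in $\bphi_2$ ``absorbs arbitrary unimodular signs,'' so that $\norm{\Ind_{\varepsilon,A}+f}_{\DL,\ww}=\norm{\Ind_{A}+\abs{f}}_{\DL,\ww}$. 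This is false: permutations cannot undo cancellation between coordinates of opposite sign. For instance, $\bphi_2(\emptyset;\ee_1-\ee_2)=\sup_{i\neq j}\abs{w_i-w_j}\le 1$, whereas $\bphi_2(\emptyset;\ee_1+\ee_2)=1+w_2$. Worse, your identity with $A=\emptyset$ would give $\norm{f}_{\DL,\ww}=\norm{\abs{f}}_{\DL,\ww}$ for every $f$, which combined with Lemma~\ref{lem:PropNorm}\ref{P:8} would make the unit vector system of $\DL_\ww$ $1$-lattice unconditional for every weight --- contradicting Proposition~\ref{p:renormingclosetotwo} and Theorem~\ref{thm:Main}, i.e., the very phenomenon the paper exists to exhibit. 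What is true, and what the combination of Lemma~\ref{lem:PA} and Lemma~\ref{lem:PropNorm}\ref{P:13} actually delivers, is that signs may be erased \emph{only on $A$ and $B$}: Lemma~\ref{lem:PA} restricts the supremum to sets $E\supseteq E_g\supseteq A$ (where $g=\Ind_{\varepsilon,A}+f$), and for such $E$ the coordinates in $A$ enter only through $\bphi_1$, which sees moduli, while $\bphi_2(E;g)=\bphi_2(E;f)$ because $\bphi_2$ depends only on coordinates outside $E$. The signs of $f$ must be carried along untouched.

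A second, related misstep: your reduction to $\abs{A}=\abs{B}$ invokes Lemma~\ref{lem:PropNorm}\ref{P:8}, but \ref{P:8} compares a vector with a \emph{nonnegative} pointwise majorant, so it cannot show that adjoining unimodular coordinates to a vector with sign-varying (or complex) entries does not decrease the norm; that claim is essentially the nontrivial content of Property (A) for this norm, not a lattice property it enjoys. Your second paragraph does sense this and points to \eqref{e2}, which is the right idea, but note that \eqref{e1}--\eqref{e2} concern one fixed vector and a growing set $E$; for $\abs{A}<\abs{B}$ one must rerun that computation for the \emph{pair} of vectors $g$ and $g+\lambda\,\ee_k$ (with $\abs{\lambda}=1$, $k\notin\supp(g)$, $\norm{g}_\infty\le 1$), showing $\bphi_2(E;g)\le\bphi_2(E\cup\{k\};g+\lambda\ee_k)+w_{n+1}$ and $\bphi_1(E\cup\{k\};g+\lambda\ee_k)-\bphi_1(E;g)\ge w_{n+1}$ for $n=\abs{E}$ and $k\notin E$; citing Lemma~\ref{lem:PA} as a black box does not cover this. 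With these two repairs, the remaining skeleton of your argument --- transport $A$ onto a subset of $B$ by a permutation fixing $\supp(f)$ and invoke Lemma~\ref{lem:PropNorm}\ref{P:0}/\ref{P:13} --- is exactly the paper's intended route.
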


\begin{proof}
The isometric bidemocracy follows from Lemma~\ref{lem:PropNorm}\ref{P:6} and \ref{P:7}.
To obtain Property (A), combine Lemma~\ref{lem:PA} with Lemma~\ref{lem:PropNorm}\ref{P:13}. 
\end{proof}

In order to compute the norm  of specific vectors in $\DL_\ww$, it will be convenient to use the following improvement ot Lemma~\ref{lem:PA}.

\begin{lemma}\label{lem:PB}
Let $\ww$ be a  weight and $f=(a_j)_{j=1}^\infty\in c_{00}$. Then 
\[
\norm{f}_{\DL,\ww}=\sup_{ E_f\subset E \subset\supp(f) } \bphi_1(E;f) + \bphi_2(E;f).
\] 
\end{lemma}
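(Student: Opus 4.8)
The plan is to deduce Lemma~\ref{lem:PB} from Lemma~\ref{lem:PA} by showing that the indices lying outside $\supp(f)$ never increase $\bphi$, so they may be discarded. Since every $E$ with $E_f\subset E\subset\supp(f)$ is in particular a finite set containing $E_f$, the family of sets in the supremum of Lemma~\ref{lem:PB} is contained in the one of Lemma~\ref{lem:PA}; hence ``$\le$'' is immediate. All the content is in the reverse inequality, and for it it suffices to prove that every finite $E\supseteq E_f$ satisfies $\bphi(E;f)\le\bphi(E\cap\supp(f);f)$, because $E\cap\supp(f)$ then verifies $E_f\subset E\cap\supp(f)\subset\supp(f)$, and taking the supremum over all admissible $E$ in Lemma~\ref{lem:PA} yields $\norm{f}_{\DL,\ww}$ bounded above by the right-hand side of Lemma~\ref{lem:PB}. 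Throughout I would assume $f\neq 0$, the case $f=0$ being trivial; note that then $E_f\neq\emptyset$ and $E_f\subset\supp(f)$.

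The next step is to reduce the task to deleting a single index outside the support. I claim that if $k\in E$ with $a_k=0$ and $E':=E\setminus\{k\}$, then $E_f\subset E'$ and $\bphi(E';f)\ge\bphi(E;f)$. Granting this, one removes the elements of $E\setminus\supp(f)$ one at a time, always keeping a superset of $E_f$ (so $\abs{E'}\ge\abs{E_f}\ge 1$, which rules out any division by zero), and reaches $E\cap\supp(f)$ after finitely many steps with $\bphi$ nondecreasing throughout. Observe also that $k\notin E_f$ forces $\abs{E}\ge\abs{E_f}+1\ge 2$, so writing $n=\abs{E}$ we have $n-1\ge 1$.

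The claim splits over the two summands. For $\bphi_1$, the vanishing of $a_k$ gives $\sum_{j\in E'}\abs{a_j}=\sum_{j\in E}\abs{a_j}$, so
\[
\bphi_1(E';f)=\frac{s_{n-1}}{n-1}\sum_{j\in E}\abs{a_j}\ge\frac{s_n}{n}\sum_{j\in E}\abs{a_j}=\bphi_1(E;f),
\]
where the inequality uses that $(s_n/n)_n$ is nonincreasing because $\ww$ is (the same monotonicity already invoked in the proof of Lemma~\ref{lem:PropNorm}\ref{P:15}). For $\bphi_2$, every $\varphi\in\Pi(E)$, i.e.\ every bijection from $\NN_{>n}$ onto $\NN\setminus E$, extends to $\psi\in\Pi(E')$ by setting $\psi(n)=k$ and $\psi(j)=\varphi(j)$ for $j>n$; this is a bijection from $\NN_{>n-1}$ onto $\NN\setminus E'=(\NN\setminus E)\cup\{k\}$, and since $a_k=0$,
\[
\sum_{j=n}^\infty a_{\psi(j)}w_j=a_k w_n+\sum_{j=n+1}^\infty a_{\varphi(j)}w_j=\sum_{j=n+1}^\infty a_{\varphi(j)}w_j.
\]
Taking absolute values and the supremum over $\varphi\in\Pi(E)$ gives $\bphi_2(E';f)\ge\bphi_2(E;f)$. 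Adding the two estimates proves the claim, and hence the lemma.

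There is no serious obstacle here; the argument is a monotonicity and relabeling computation. The only points needing care are (i) ensuring the deletion keeps $E'$ nonempty so that $\bphi_1(E';f)$ is well defined, which is why I isolate the trivial case $f=0$ and exploit $E_f\subset E'$, and (ii) the index bookkeeping in $\bphi_2$: shrinking $E$ by one element shifts the domain of the competing bijections from $\NN_{>n}$ to $\NN_{>n-1}$, so the newly freed slot is precisely the index $n$ assigned to $k$, whose contribution vanishes because $a_k=0$.
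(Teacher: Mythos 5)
Your proof is correct and follows essentially the same route as the paper: both reduce to showing that deleting a single index $k\in E\setminus\supp(f)$ does not decrease $\bphi$, using the monotonicity of $(s_n/n)_n$ for $\bphi_1$ and the identical extension $\psi\in\Pi(E\setminus\{k\})$ with $\psi(n)=k$ for $\bphi_2$. Your extra bookkeeping (the trivial case $f=0$, the iteration down to $E\cap\supp(f)$, and the nonemptiness of $E'$) only makes explicit what the paper leaves implicit.
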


\begin{proof}
It suffices to prove that  the inequality 
\[
 \bphi_1(E;f) + \bphi_2(E;f)\le  \bphi_1(E\setminus\{k\};f) + \bphi_2(E\setminus\{k\};f).
 \]
 holds for all  $E\subset\NN$ with $n:=\abs{E}<\infty$ and all  $k\in E\setminus\supp(f)$. To that end,
pick $\varphi\in\Pi(E)$,  and let  $\psi\in\Pi(E \setminus \{k\}) $ be the map given by
 \[
 \psi(j)= \begin{cases} k & \mbox{ if }  j =n ,  \\  \varphi(j) & \mbox{ if }  j\ge n+1. \end{cases}
 \]
 Since $x_k=0$,
\[
 \sum_{j=n+1}^\infty a_{\varphi(j)} w_{j} = \sum_{j=n}^\infty a_{\psi(j)} w_j.
 \]
Hence,  $\bphi_2(E;f) \le   \bphi_2(E\setminus\{k\};f)$.  Since  $\bphi_1(E;f) \le   \bphi_1(E\setminus\{k\};f)$, we are done.
\end{proof}

\section{The suppression unconditionality constant of bases with Property (A)}\label{s:suppression unconditionality}\noindent
  
\noindent Given a  weight $\ww$, we denote by $\Ks_\ww$ the suppression unconditionality constant of the unit vector system of the Banach space $\DL_\ww$ constructed in Section~\ref{s:building bases}. The following lemma will become instrumental in obtaining a uniform bound for $\Ks_\ww$.

\begin{lemma}\label{lem:TimurAdrenaline}
    Let $\ww$ be a weight, $f\in c_{00}$, and $A \subset\NN$. Then:
    \begin{enumerate}[label=(\roman*),leftmargin=*,widest=ii]
     \item\label{bdd:B} $\ww_\infty \norm{f}_1\le \bphi_1(F;f)$ for some $F\subset \NN$ finite, and
        \item\label{bdd:A} $\bphi_2(E;S_A(f))\le \bphi_2 (E;f) +\ww_\infty\norm{S_{A^c}(f)}_1$ for every $E\subset \NN$ finite.
    \end{enumerate}
\end{lemma}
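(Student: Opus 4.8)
The two assertions are independent, so I would prove them in turn; \ref{bdd:B} is essentially immediate, while \ref{bdd:A} needs a rearrangement trick. For \ref{bdd:B}, the first thing to record is that the averages $s_n/n$ are nonincreasing: since $s_n\ge nw_n\ge nw_{n+1}$ we have $\frac{s_{n+1}}{n+1}-\frac{s_n}{n}=\frac{nw_{n+1}-s_n}{n(n+1)}\le0$. As $\lim_n s_n/n=\ww_\infty$, monotonicity forces $s_n/n\ge\ww_\infty$ for every $n$. Taking any finite $F\supseteq\supp(f)$ we then obtain $\bphi_1(F;f)=\frac{s_{\abs F}}{\abs F}\sum_{j\in F}\abs{a_j}=\frac{s_{\abs F}}{\abs F}\norm{f}_1\ge\ww_\infty\norm{f}_1$, which is exactly \ref{bdd:B}.

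For \ref{bdd:A} write $n=\abs E$, $g=S_A(f)$, and note $f=g+S_{A^c}(f)$. The key reformulation is that $\bphi_2(E;g)$ depends only on how the nonzero coordinates of $g$ lying outside $E$---those indexed by $B:=(A\cap\supp f)\setminus E$---are distributed among the slots $\NN_{>n}$. Indeed, for $\varphi\in\Pi(E)$ only the indices with $\varphi(j)\in B$ contribute, so $\sum_{j>n}g_{\varphi(j)}w_j=\sum_{k\in B}a_k\,w_{\varphi^{-1}(k)}$; conversely every injection $\tau\colon B\to\NN_{>n}$ equals $\varphi^{-1}|_B$ for some $\varphi\in\Pi(E)$. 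Hence $\bphi_2(E;g)=\sup_\tau\abs{\sum_{k\in B}a_k w_{\tau(k)}}$. I would then realize each $\tau$ inside a bijection $\varphi$ that banishes the remaining nonzero coordinates of $f$ outside $E$---those indexed by $C:=(A^c\cap\supp f)\setminus E$---to tail slots of nearly minimal weight. Given $\tau$ and $\epsilon>0$, pick $M$ with $w_j<\ww_\infty+\epsilon$ for $j>M$ (possible since $w_j\to\ww_\infty$); as $B$ and $C$ are finite, extend $\tau$ to $\varphi\in\Pi(E)$ by placing $C$ at $\abs C$ slots $>M$ disjoint from $\tau(B)$ and filling the rest arbitrarily. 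Then, using $\abs{\sum_{j>n}f_{\varphi(j)}w_j}\le\bphi_2(E;f)$ and $w_{\varphi^{-1}(k)}<\ww_\infty+\epsilon$ for $k\in C$,
\[
\Bigl\lvert\sum_{k\in B}a_k w_{\tau(k)}\Bigr\rvert\le\Bigl\lvert\sum_{j>n}f_{\varphi(j)}w_j\Bigr\rvert+\sum_{k\in C}\abs{a_k}\,w_{\varphi^{-1}(k)}\le\bphi_2(E;f)+(\ww_\infty+\epsilon)\norm{S_{A^c}(f)}_1.
\]
Taking the supremum over $\tau$ and letting $\epsilon\downarrow0$ yields \ref{bdd:A}.

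The delicate step, and the one I would flag, is that one may not simply take a $\varphi$ optimal for $g$ and reuse it for $f$: such an optimizer is indifferent to the $A^c$-coordinates and could leave them parked at the large-weight slot $w_{n+1}$, producing only the useless estimate with $w_{n+1}$ in place of $\ww_\infty$. Recasting $\bphi_2(E;g)$ as a supremum over placements of the $A$-coordinates \emph{alone}, and only afterwards completing $\tau$ to a bijection that pushes the $A^c$-coordinates out to weights below $\ww_\infty+\epsilon$, is precisely what upgrades the crude factor $w_{n+1}$ to the sharp $\ww_\infty$; everything else is bookkeeping.
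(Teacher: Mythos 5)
Your proof is correct and is essentially the paper's own argument: part \ref{bdd:B} differs only in how you justify $s_n/n\ge\ww_\infty$ (the paper uses $s_n/n\ge w_n\ge\ww_\infty$ directly), and part \ref{bdd:A} is the same ``push the $A^c$-coordinates to tail slots of weight close to $\ww_\infty$ and take a limit'' device, merely phrased via injections $\tau\colon B\to\NN_{>n}$ extended to bijections rather than via the paper's swap $\sigma=\varphi\circ\pi$ of $D=\varphi^{-1}(\supp(f)\setminus A)$ with a far-out set $D_1$. The subtlety you flag (an optimizer for $S_A(f)$ may park the $A^c$-coordinates at large weights) is exactly what the paper's permutation $\pi$ handles, so the two proofs resolve it identically.
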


\begin{proof}
To prove \ref{bdd:B} we choose $F=\supp(f)$ and set $n=\abs{F}$. Then
\[
\bphi_1(F;f)=\frac{s_n}{n} \norm{f}_1\ge w_n  \norm{f}_1\ge \ww_\infty \norm{f}_1.
\]
To see \ref{bdd:A} we pick  $\varphi\in \Pi(E)$. Let us put $n=\abs{E}$,
\[
B=\varphi^{-1}( \supp(f)\cap A), 
\quad\text{and}\quad D=\varphi^{-1} (\supp(f)\setminus A).
\]
If $\varphi^{-1}(\supp(f)) = \emptyset$, i.e., $\supp(f)\subset E$, then $\bphi_2(E;S_A(f)) = 0$ and there is nothing to prove. Assume that $\varphi^{-1}(\supp(f))\neq \emptyset$; we can then let $n_0 = \max\varphi^{-1}(\supp(f))$ and  $m\in\NN_{>n_0}$. Select a  set $D_1\subset \NN_{>m}$ such that $\abs{D_1}=\abs{D}$ and a bijection $\pi:\NN_{>n}\to \NN_{>n}$ such that $\pi(D)=D_1$, $\pi(D_1)=D$, and $\pi(j)=j$ for $j\notin D\cup D_1$. Consider now $\sigma=\varphi\circ\pi$.  Since $D_1\subset \NN_{>m}$  and $\sigma(D_1)=\supp(f)\setminus A$,
\begin{align*}
\abs{ \sum_{j=n+1}^\infty a_{\varphi(j)} \chi_A(\varphi(j)) w_j - \sum_{j=n+1}^\infty a_{\sigma(j)} w_j}&
=\abs{ \sum_{j\in B} a_{\varphi(j)} w_j - \sum_{j\in D_1\cup B} a_{\sigma(j)} w_j}\\
&=\abs{\sum_{ j\in D_1 } a_{\sigma(j)} w_j}\\
&\le w_m \norm{S_{A^c}(f)}_1.
\end{align*}
Applying the triangle law and letting $m$ tend to infinity we obtain 
\[
\abs{\sum_{j=n+1}^\infty a_{\varphi(j)} \chi_A(\varphi(j)) w_j} \le \bphi_2(E;f)+\ww_\infty \norm{S_{A^c}(f)}_1,
\]
and so \ref{bdd:A} holds.
\end{proof}

\begin{proposition}\label{prop:Kwuniformbound}
Let $\ww$ be a  weight. Then $\Ks_\ww\le 2$. Moreover, if $\ww_\infty=0$ then $\Ks_\ww=1$.
\end{proposition}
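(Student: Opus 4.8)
The plan is to bound $\norm{S_A(f)}_{\DL,\ww}$ for an arbitrary $f\in c_{00}$ and $A\subset\NN$ by estimating $\bphi(E;S_A(f))$ uniformly over finite sets $E$ and then taking the supremum. The two building blocks $\bphi_1$ and $\bphi_2$ behave very differently under the coordinate projection $S_A$, so I would treat them separately. For $\bphi_1$ the situation is harmless: since the coordinates of $S_A(f)$ agree with those of $f$ on $A$ and vanish off $A$, one has $\sum_{j\in E}\abs{\ee_j^*(S_A(f))}\le \sum_{j\in E}\abs{\ee_j^*(f)}$, and hence $\bphi_1(E;S_A(f))\le\bphi_1(E;f)$ for every finite $E$.

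The genuinely nontrivial part is $\bphi_2$, because projecting can destroy cancellation and thereby \emph{increase} this term. This is precisely what Lemma~\ref{lem:TimurAdrenaline}\ref{bdd:A} quantifies: discarding the coordinates in $A^c$ inflates $\bphi_2(E;\cdot)$ by at most $\ww_\infty\norm{S_{A^c}(f)}_1$, i.e.\ the lost mass is charged only against the tail value $\ww_\infty$ of the weight. Adding the two estimates gives, for every finite $E$,
\[
\bphi(E;S_A(f))\le \bphi(E;f)+\ww_\infty\norm{S_{A^c}(f)}_1\le \norm{f}_{\DL,\ww}+\ww_\infty\norm{S_{A^c}(f)}_1,
\]
and taking the supremum over $E$ yields
\[
\norm{S_A(f)}_{\DL,\ww}\le \norm{f}_{\DL,\ww}+\ww_\infty\norm{S_{A^c}(f)}_1.
\]

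It then remains to absorb the error term into the norm. Here I would invoke Lemma~\ref{lem:TimurAdrenaline}\ref{bdd:B} with $F=\supp(f)$: since $s_n/n\ge w_n\ge\ww_\infty$, we have $\ww_\infty\norm{f}_1\le\bphi_1(\supp(f);f)\le\norm{f}_{\DL,\ww}$, and because $\norm{S_{A^c}(f)}_1\le\norm{f}_1$ this gives $\ww_\infty\norm{S_{A^c}(f)}_1\le\norm{f}_{\DL,\ww}$. Combining the two displays produces $\norm{S_A(f)}_{\DL,\ww}\le 2\norm{f}_{\DL,\ww}$, whence $\Ks_\ww\le 2$.

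For the refinement, note that if $\ww_\infty=0$ the error term vanishes identically, so the second display already gives $\norm{S_A(f)}_{\DL,\ww}\le\norm{f}_{\DL,\ww}$, that is $\Ks_\ww\le 1$; the reverse inequality $\Ks_\ww\ge 1$ is automatic (take $A\supseteq\supp(f)$, so that $S_A(f)=f$), yielding $\Ks_\ww=1$. I expect the only real obstacle to be the control of $\bphi_2$ under projection, which is exactly the content of Lemma~\ref{lem:TimurAdrenaline}\ref{bdd:A}; once that estimate is in hand the rest is bookkeeping, and the conceptual point is that both the defect created by projection and its eventual absorption are governed by the single quantity $\ww_\infty\norm{\cdot}_1$.
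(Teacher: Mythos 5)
Your proposal is correct and follows essentially the same route as the paper: bound $\bphi_1(E;S_A(f))\le\bphi_1(E;f)$, control $\bphi_2$ via Lemma~\ref{lem:TimurAdrenaline}\ref{bdd:A}, and absorb the error term $\ww_\infty\norm{S_{A^c}(f)}_1$ using Lemma~\ref{lem:TimurAdrenaline}\ref{bdd:B}, with the term vanishing when $\ww_\infty=0$. The only (immaterial) difference is that you apply part \ref{bdd:B} to $f$ itself together with $\norm{S_{A^c}(f)}_1\le\norm{f}_1$, whereas the paper applies it directly to $S_{A^c}(f)$ and then uses $\bphi_1(F;S_{A^c}(f))\le\bphi_1(F;f)$.
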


\begin{proof}
 Pick $f\in c_{00}$ and $A\subset \NN$. Let $F$ be as in Lemma~\ref{lem:TimurAdrenaline}\ref{bdd:B} relative to the vector $S_{A^c}(f)$. Given $E\subset\NN$ finite, Lemma~\ref{lem:TimurAdrenaline}\ref{bdd:A} yields
 \begin{align*}
\bphi(E;S_A(f))&\le \bphi_1(E;S_A(f)) + \bphi_2 (E;f) +\ww_\infty\norm{S_{A^c}(f)}_1\\
 &\le \bphi_1(E;S_A(f)) + \bphi_2 (E;f) +\bphi_1(F;S_{A^c}(f))\\
 &\le \bphi_1(E;f) + \bphi_2 (E;f) +\bphi_1(F;f)\\
 &\le  \bphi(E;f)+ \bphi(F;f).
 \end{align*}
 Moreover, if $\ww_\infty=0$ the term $\bphi(F,f)$ can be dropped.
\end{proof}

Before proceeding further we make a stop en route to give a  nice equivalent norm on $\DL_\ww$.

\begin{proposition}\label{p:compare norms}
If $\ww$ is a  weight then $\DL_\ww = d_1(\ww)$. Quantitatively,
\[
 \norm{f}_{\DL,\ww}\le \norm{f}_{1,\ww} \le C \norm{f}_{\DL,\ww}, \quad f\in c_{00}, 
\]
where $C=4$ if $\FF=\RR$ and  $C=8$ if $\FF=\CC$. Further, in the case when $\ww_\infty = 0$, we can take $C=2$ if $\FF=\RR$ and  $C=4$ if $\FF=\CC$.
\end{proposition}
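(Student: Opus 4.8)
The plan is as follows. The left-hand inequality $\norm{f}_{\DL,\ww}\le\norm{f}_{1,\ww}$ is already recorded as the right-hand side of Lemma~\ref{lem:PropNorm}\ref{P:12}, so the entire content is the reverse estimate $\norm{f}_{1,\ww}\le C\norm{f}_{\DL,\ww}$. The three ingredients I would assemble are: (a) the two norms agree on constant-sign vectors, i.e.\ $\norm{g}_{\DL,\ww}=\norm{g}_{1,\ww}$ whenever $g$ has constant sign (Lemma~\ref{lem:PropNorm}\ref{P:14}); (b) the suppression bound $\norm{S_A(f)}_{\DL,\ww}\le\Ks_\ww\norm{f}_{\DL,\ww}$ with $\Ks_\ww\le 2$ in general and $\Ks_\ww=1$ when $\ww_\infty=0$ (Proposition~\ref{prop:Kwuniformbound}); and (c) the fact that $\norm{\cdot}_{1,\ww}$ is a genuine norm, so its triangle inequality is available.

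For real scalars I would split $f=f^+-f^-$ into its nonnegative and nonpositive parts, which have disjoint supports $A=\{j\colon a_j>0\}$ and $A'=\{j\colon a_j<0\}$, so that $f^+=S_A(f)$ and $-f^-=S_{A'}(f)$. The triangle inequality for $\norm{\cdot}_{1,\ww}$ then gives $\norm{f}_{1,\ww}\le\norm{f^+}_{1,\ww}+\norm{f^-}_{1,\ww}$, and since each of $f^+$, $f^-$ has constant sign, step (a) converts these Lorentz norms into $\DL$-norms of coordinate projections of $f$; step (b) bounds each by $\Ks_\ww\norm{f}_{\DL,\ww}$. This produces $\norm{f}_{1,\ww}\le 2\Ks_\ww\norm{f}_{\DL,\ww}$, which is $4\norm{f}_{\DL,\ww}$ in general and $2\norm{f}_{\DL,\ww}$ when $\ww_\infty=0$.

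For complex scalars the obstacle is that $\norm{\cdot}_{\DL,\ww}$ is \emph{not} a lattice norm: because $\bphi_2$ involves the cancelling sum $\sum a_{\varphi(j)}w_j$, one cannot simply dominate $\Re f$ or $\Im f$ by $\abs{f}$ inside the $\DL$-norm. I would circumvent this using conjugation invariance: since $w_j$ is real and the quantities in $\bphi_1$ depend only on $\abs{a_j}$, one checks directly from the definitions that $\norm{\bar f}_{\DL,\ww}=\norm{f}_{\DL,\ww}$; hence $\Re f=\tfrac12(f+\bar f)$ and $\Im f=\tfrac1{2i}(f-\bar f)$ satisfy $\norm{\Re f}_{\DL,\ww}\le\norm{f}_{\DL,\ww}$ and $\norm{\Im f}_{\DL,\ww}\le\norm{f}_{\DL,\ww}$. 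I would then apply the triangle inequality for $\norm{\cdot}_{1,\ww}$ to $f=\Re f+i\,\Im f$ and run the real-case argument on each of the real vectors $\Re f$ and $\Im f$, obtaining $\norm{\Re f}_{1,\ww}\le 2\Ks_\ww\norm{\Re f}_{\DL,\ww}\le 2\Ks_\ww\norm{f}_{\DL,\ww}$ and likewise for $\Im f$. Summing yields $\norm{f}_{1,\ww}\le 4\Ks_\ww\norm{f}_{\DL,\ww}$, which is $8\norm{f}_{\DL,\ww}$ with $\Ks_\ww\le 2$ and $4\norm{f}_{\DL,\ww}$ when $\ww_\infty=0$. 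The one genuinely delicate point is the conjugation-invariance verification, which is exactly what unlocks the passage from the non-lattice $\DL$-norm to its real and imaginary components; everything else is bookkeeping with the triangle inequality and the two cited results.
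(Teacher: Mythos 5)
Your proof is correct and follows essentially the same route as the paper: the paper also splits $f$ into the constant-sign pieces $\Re^{\pm}(f)$, $\Im^{\pm}(f)$, applies the triangle inequality for $\norm{\cdot}_{1,\ww}$, converts each piece to the $\DL$-norm via Lemma~\ref{lem:PropNorm}\ref{P:14}, and bounds each piece by the suppression constant from Proposition~\ref{prop:Kwuniformbound}, arriving at exactly the same constants $K\Upsilon$. The only difference is organizational: you perform the splitting in two stages and explicitly verify the reduction $\norm{\Re f}_{\DL,\ww}\le\norm{f}_{\DL,\ww}$ via conjugation invariance, a step the paper's one-line proof leaves implicit.
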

\begin{proof}
Let $\Upsilon=2$ if $\FF=\RR$ and  $\Upsilon=4$ if $\FF=\CC$. Also, let $K=1$ if $\ww_\infty=0$ and $K=2$ if $\ww_\infty>0$.
The left-hand side inequality follows from Lemma \ref{lem:PropNorm}\ref{P:12}. To see the right-hand side inequality, pick $f\in c_{00}$. By  Lemma~\ref{lem:PropNorm}\ref{P:14},
 \begin{align*}
\norm{f}_{1,\ww}
&\le\norm{\Re^+(f)}_{1,\ww}+\norm{\Re^-(f)}_{1,\ww}+\norm{\Im^+(f)}_{1,\ww}+\norm{\Im^-(f)}_{1,\ww}\\
&= \norm{\Re^+(f)}_{\DL,\ww}+\norm{\Re^-(f)}_{\DL,\ww}+\norm{\Im^+(f)}_{\DL,\ww}+\norm{\Im^-(f)}_{\DL,\ww}\\
&\le K \Upsilon \norm{f}_{\DL,\ww}.\qedhere
 \end{align*}
\end{proof}

In light of Proposition~\ref{prop:Kwuniformbound} in order to find bases that  are not $1$-suppression unconditional despite having Property (A), we must focus on the case where $\ww_\infty>0$. In this situation, by Proposition~\ref{p:compare norms}, $\DL_\ww=d_1(\ww)=\ell_1$ up to an equivalent norm. We could have come to this conclusion without invoking this result. Indeed, 
\[
\ww_\infty \norm{f}_1 \le \norm{f}_{m,\ww} \le \norm{f}_{\DL,\ww} \le \norm{f}_{1,\ww}\le \norm{f}_1
\]
for all $f\in c_{00}$. These estimates for $\norm{\cdot}_{\DL,\ww}$
yield that the unconditionality constant of the unit vector system of $\DL_\ww$ does not exceed $1/\ww_\infty$. Hence, by Proposition~\ref{prop:Kwuniformbound},
\[
\Ks_\ww\le \min\left\{2,\frac{1}{\ww_\infty}\right\}.
\]
Theorem~\ref{thm:Main} will follow once we show that $\Ks_\ww > 1$ for a suitable weight $\ww$.

\begin{proposition}\label{p:renormingclosetotwo}\label{p:renorming closer to optimal}
For each $K<3/2$ there exists a weight $\ww$ so that $\Ks_\ww\ge K$.
\end{proposition}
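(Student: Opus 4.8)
The plan is to realise a suppression constant arbitrarily close to $3/2$ by exhibiting, for a well-chosen weight, a single vector $f\in c_{00}$ together with a coordinate projection $S_A$ whose ratio $\norm{S_A(f)}_{\DL,\ww}/\norm{f}_{\DL,\ww}$ is as close to $3/2$ as we please; since $\Ks_\ww$ dominates every such ratio, this suffices. I would work with the two-level weight $w_1=1$ and $w_j=v$ for $j\ge 2$, where $v\in(0,1)$ is a small parameter (so $\ww_\infty=v>0$, which Proposition~\ref{prop:Kwuniformbound} shows is unavoidable). The virtue of this weight is that all tail weights coincide, so for every finite $E$ with $n=\abs{E}$ one has the transparent formula $\bphi_2(E;f)=v\,\bigl\lvert\sum_{j\notin E}a_j\bigr\rvert$; the cancellation that lowers $\norm{f}_{\DL,\ww}$ is thus completely explicit.

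For the test vector I would take $f=\ee_1+r\,(\Ind_P-\Ind_Q)$, where $P,Q\subset\NN_{>1}$ are disjoint with $\abs{P}=\abs{Q}=p$ and $r\in(0,1)$ is a second small parameter. The coordinate $\ee_1$ is the unique one of maximal modulus, hence $E_f=\{1\}$, and Lemma~\ref{lem:PA} (or Lemma~\ref{lem:PB}) lets us compute the norm as a supremum over sets $E\supseteq\{1\}$. Choosing $E=\{1\}$ throws the whole $\pm r$ cloud into the tail, where, because $\abs{P}=\abs{Q}$, it cancels exactly: $\bphi(\{1\};f)=s_1=1$. On the other hand the projection removing $Q$ produces the nonnegative vector $S_{\{1\}\cup P}(f)=\ee_1+r\,\Ind_P$, whose $\DL$-norm equals its Lorentz norm by Lemma~\ref{lem:PropNorm}\ref{P:14}, namely $\norm{\ee_1+r\,\Ind_P}_{1,\ww}=1+r\,p\,v$. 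Consequently $\Ks_\ww\ge 1+rpv$, provided we can certify that $\norm{f}_{\DL,\ww}=1$.

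The heart of the argument, and the step I expect to be the main obstacle, is precisely this certification, i.e.\ showing $\bphi(E;f)\le 1$ for every finite $E\supseteq\{1\}$. Parametrising $E$ by the number $m$ of low coordinates it contains together with the signed imbalance of those coordinates, the supremum defining $\norm{f}_{\DL,\ww}$ collapses to a single scalar inequality, which I would show holds exactly when $p$ does not exceed a threshold of order $\tfrac{1}{2vr}$. Fixing $p=\bigl\lfloor (1-r-v-vr)/(2vr)\bigr\rfloor$ then yields $\norm{f}_{\DL,\ww}=1$ while forcing $rpv\to\tfrac12$ as $r,v\to0^{+}$, so the ratio $1+rpv$ tends to $3/2$ from below. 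Thus, given $K<3/2$, one chooses $v$ and $r$ small enough (and $p$ as above) that $1+rpv\ge K$, producing a weight with $\Ks_\ww\ge K$. Let me stress that the dominant coordinate $\ee_1$ is essential: without a strict maximum, $E_f$ would be forced to contain the whole support, destroying the $\bphi_2$ cancellation and collapsing the construction to the $1$-unconditional Lorentz case.
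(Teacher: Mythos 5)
Your proposal is correct and follows essentially the same route as the paper's proof: the same two-level weight $(1,v,v,\dots)$, the same test vector $\ee_1 + r\,(\Ind_P-\Ind_Q)$ with the projection deleting the negative block, the same use of Lemmas~\ref{lem:PA}/\ref{lem:PB} and Lemma~\ref{lem:PropNorm}\ref{P:14}, and the same endpoint computation --- indeed your threshold $p\le (1-r-v-rv)/(2rv)$ is exactly the paper's condition $(2n+1)\omega^2+2\omega-1\le 0$ specialized to $r=v=\omega$. The only cosmetic difference is that you decouple the coefficient $r$ from the weight $v$ and normalize $\norm{f}_{\DL,\ww}=1$ by choosing $p$ maximal, then let $r,v\to 0^+$, whereas the paper sets $r=\omega$ and optimizes the ratio $\min\{\beta(n,\omega),\gamma(n,\omega)\}$ over $\omega$ for fixed $n$; both give $\Ks_\ww\to 3/2$.
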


\begin{proof}
Fix $\omega\in(0,1)$ and consider the weight
\[
\ww = (1, \omega, \omega, \ldots).
\]
We shall estimate $\Ks_\ww$ by comparing the norms of the vectors
\[
f_{n,\omega}=\ee_1+ \omega\Ind_{A_n}, \quad\text{and}\quad  g_{n,\omega}=\ee_1+ \omega\Ind_{A_n}-\omega\Ind_{B_n},
\]
where $A_n=\NN\cap[2,n+1]$ and  $B_n=\NN\cap[n+2,2n+1]$. By Lemma~\ref{lem:PropNorm}\ref{P:14}, 
\[
\norm{f_{n,\omega}}_{\DL,\ww}=1+n\omega^2.
\]
By Lemma \ref{lem:PB} to evaluate $\norm{g_{n,\omega}}$ we need only  consider the case  $\{1\} \subset E \subset \{1\} \cup A_n \cup B_n$. We have
\[
\bphi_1(g_{n,\omega};E)=\frac{(1+\omega(2n-a-b))^2}{2n+1-a-b}, \quad
\bphi_2(g_{n,\omega};E)=\abs{a-b} \omega^2,
\]
where $a=\abs{A_n\setminus  E}$ and  $b=\abs{B_n\setminus  E}$. Hence, for $a+b$ fixed, the maximum value of $\bphi(g_{n,\omega}; E)$  is attained when $a$ attains its minimum value. Therefore, combining Lemma~\ref{lem:PA} with Lemma~\ref{lem:PropNorm}\ref{P:13} gives
\[
\norm{g_{n,\omega}}_{\DL,\ww}=\sup_{0\le k \le 2n} \bphi(g_{n,\omega}; E_k),
\]
where $E_k=[1,k+1]\cap \NN$. Moreover, if $a_k$ and $b_k$ are the integers $a$ and $b$ above defined that correspond to the set $E=E_k$,  then $a_k=n-k$ and $b_k=n$ for all $k=0$, \dots, $n$. By Lemma~\ref{lem:PropNorm}\ref{P:15},
\[
\norm{g_{n,\omega}}_{\DL,\ww}=\sup_{0\le k \le n} \bphi(g_{n,\omega}; E_k)=
\sup_{0\le k \le n}\alpha(\omega,k),
\]
where 
\[
\alpha(\omega,t)=\frac{(1 + \omega t)^2}{t+1} +\omega^2 t, \quad 0\le t \le n.
\]
The expression
\[
\alpha(\omega,t)=\frac{(1-\omega)^2}{1+t}+  2\omega^2 t +2\omega-\omega^2
\]
shows  that the function $\alpha(\omega,\cdot)$ is convex hence the maximum value of $\alpha(\omega,t)$, $0\le t \le n$, is attained either at the endpoint  $t=0$ or the endpoint $t=n$. Therefore,
\[
\norm{g_{n,\omega}}_{\DL,\ww}=\max\enbrace{ 1, \frac{(1+n\omega)^2}{n+1}+n \omega^2}.
\]
From here it follows that 
\[
\Ks_\ww \ge \delta(n,\omega):= \frac{\norm{f_{n,\ww}}_{\DL,\ww}}{\norm{g_{n,\ww}}_{\DL,\ww}}=\min\enbrace{\beta(n,\omega),\gamma(n,\omega)},
\]
where
\[
\beta(n,\omega)=1+n\omega^2, \quad \gamma(n,\omega)=\frac{1+n\omega^2}{n \omega^2 +{(1+n\omega)^2}/(n+1)}.
\]
For $n$ fixed, the function $\beta(n,\omega)$, $0\le \omega \le 1$, increases from $1$ to $1+n$ while the function $\gamma(n,\omega)$, $0\le \omega \le 1$, decreases from 
$n+1$ to $(1+n)/(1+2n)$. Hence, $\delta(n,\omega)$ attains its maximum value at the only point $\omega=\omega_n\in(0,1)$ that satisfies
\[
\beta(n,\omega)=\gamma(n,\omega).
\]
We infer that $\omega_n$ is the positive root of the second order polynomial
$
\omega\mapsto (2n+1)\omega^2 +2\omega-1,
$
that is, 
\[
\omega_n=\frac{-1+\sqrt{2n+2}}{2n+1}=\frac{1}{1+\sqrt{2n+2}}.
\]
Summing up, the weight $\ww=(1,\omega_n,\omega_n,\ldots)$ satisfies
\[
\Ks_\ww\ge K_n:= 1+n \omega_n^2= 1+(1-2\omega_n)\frac{n}{2n+1}.
\]
As $(\omega_n)_{n=1}^\infty$ decreases from $w_1=1/3$ to zero, $(K_n)_{n=1}^\infty$ increases from $K_1=10/9$ to $3/2$.
\end{proof}

\begin{remark}\label{optimality for 2}
Once we know there is a Banach space $\XX$ with a basis $\BB= (\xx_n)_{n=1}^\infty$ having Property (A) and $K_s>1$, one can ask whether this inequality will be fulfilled by vectors with small support. It is easy to check that if  $g \in \XX$ is supported on a set of cardinality  at most $2$, then $\norm{S_A(g)} \le \norm{g}$ for any set $A$. 
This is so because if $g = a \,\xx_k + b\, \xx_n$ with $k \neq n$ and $\abs{a} \leq \abs{b}$, then due to Property (A), $\norm{g} = \norm{ -a \xx_k + b \xx_n}$. Hence, by the triangle law, 
 \[
 \norm{g} \geq \norm{b\, \xx_n} = \abs{b}\norm{\xx_n} \geq \abs{a}\norm{\xx_k}
 =\norm{a\, \xx_k}.
 \]
 
 In contrast, one can witness the inequality $K_s>1$ on vectors supported on three elements. In fact, the proof of Proposition~\ref{p:renorming closer to optimal} gives $g\in D_\ww$ and $A\subset\NN$ with $\abs{\supp(g)}=3$  and $\norm{S_A(g)}/\norm{g}\ge 10/9$. We do not know whether this is optimal. However, it is so if we restrict ourselves to $\DL_\ww$ spaces, and their elements $g=g_a=\ee_1+a\, \ee_2-a\, \ee_3$, $0<a<1$. Indeed, setting $f_a=\ee_1+a\, \ee_2$, we can show that the maximum value of the ratio $\norm{f_a}_{\DL,\ww}/\norm{g_a}_{\DL,\ww}$ when $a$ runs over $(0,1)$ and $\ww$ runs over all possible weights is $10/9$, and its attained when $a=1/3$ and 
$\ww=\enpar{1,{1}/{3},{1}/{3}, {1}/{3},\dots}$.
\end{remark}

The example used to prove Proposition~\ref{p:renorming closer to optimal} is ``tight'' in the sense that $1$-symmetric Markushevich bases which are not equivalent to the standard $\ell_1$-basis 
are $1$-suppression unconditional.  Proposition~\ref{p:can distort l1 only} below substantiates this assertion.
Before stating it, we alert the reader that in spite of the fact that symmetric Schauder bases are always unconditional (see, e.g., \cite{LT1}*{Section 3.a}), there exist  symmetric  bases 
that are not unconditional (see Example~\ref{ex:ConditionalSymmetric} below). A standard argument permits to obtain that a symmetric basis  is $C$-symmetric for some $C\ge 1$ (see, e.g., \cite{AlbiacKalton2016}*{Proof of Lemma 9.2.2}), whence it is $1$-symmetric under a suitable renorming of the space.  We also deduce that any symmetric basis is seminormalized. Since symmetry dualizes well, any symmetric basis satisfies \ref{seminormalized}.
Another important property of symmetric bases that still holds in our more general framework is the boundedness of the averaging projections (see, e.g., \cite{LT1}*{Proposition 3.a.4}).

Given $a\in\FF$, its \emph{sign} will be the number $\sgn(a)=a/\abs{a}\in\EE$, with the convention that $\sgn(0)=1$.

\begin{lemma}\label{lem:NotUncNotUCC}
Suppose $\BB=(\xx_n)_{n=1}^\infty$ is a $1$-symmetric basis of a Banach space $\XX$. If $\BB$ is not unconditional then for every $\delta>0$ there are  $\lambda\in \EE$ and $m\in\NN$ such that
\begin{equation}\label{eq:uccbroken}
N(A,B,\lambda):=\norm{\frac{ \Ind_A}{\norm{\Ind_A}}- \lambda \frac{ \Ind_B}{\norm{\Ind_B}}}\le \delta
\end{equation}
for any disjoint sets $A$, $B \subset \NN$ of finite cardinality of at least $m$.
\end{lemma}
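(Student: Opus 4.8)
The plan is to use $1$-symmetry to collapse every norm in sight to a function of cardinalities, to read off from the failure of unconditionality a near-cancellation of balanced signed indicators, and then to transport this cancellation to \emph{all} large disjoint pairs after pinning down a single phase $\lambda$. First I would observe that, by $1$-symmetry, $\norm{\Ind_A}$ depends only on $n=\abs A$; write $s_n$ for this value. Any two pairs of disjoint finite sets of the same two cardinalities are interchanged by a permutation of $\NN$, which acts isometrically on $\BB$, so $N(A,B,\lambda)$ depends only on $(\abs A,\abs B,\lambda)$; abbreviate it $N(p,q,\lambda)$ and put $u_n=\Ind_{\{1,\dots,n\}}/s_n$. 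Applying the triangle inequality to $\Ind_{\{1,\dots,p+q\}}=\Ind_{\{1,\dots,p\}}+\Ind_{\{p+1,\dots,p+q\}}$ gives $s_{p+q}\le s_p+s_q$, so Fekete's lemma yields $c:=\lim_n s_n/n=\inf_n s_n/n\in[0,\norm{\xx_1}]$. I would also keep at hand the norm-one averaging projection supplied by $1$-symmetry (average over all permutations of a finite block, as in \cite{LT1}*{Proposition 3.a.4}): it sends any $f$ supported on a finite set $E$ to $\bar a\,\Ind_E$, where $\bar a$ is the mean of the coefficients, without increasing the norm.

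\emph{Cancellation from conditionality.} The core is to show that non-unconditionality forces signed indicators to become negligible: there are sizes $N_k\to\infty$ and signs with $\norm{\Ind_{\varepsilon,\{1,\dots,N_k\}}}=o(s_{N_k})$. I would extract this from the fact that $K_l=\infty$ forces $\sup_N s_N/\min_{\varepsilon}\norm{\Ind_{\varepsilon,\{1,\dots,N\}}}=\infty$ (an Abel-summation/layer-cake reduction shows the lattice constant is controlled by flat signed indicators). The averaging projection applied to the minimizing $\Ind_{\varepsilon,\{1,\dots,N_k\}}$ forces the optimal signs to be asymptotically balanced, $\tfrac1{N_k}\abs{\sum_j\varepsilon_j}\to0$. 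In the real case this pins the pattern to a nearly balanced two-valued one $\Ind_P-\Ind_M$ with $\abs P\approx\abs M\approx N_k/2$; discarding the $o(N_k)$ imbalance gives $\norm{\Ind_A-\Ind_B}=o(s_n)$ for equal-size disjoint $A,B$ of size $n\approx N_k/2$, so that $\lambda=1$. In the complex case the balanced pattern need not be two-valued, and one extra averaging step over the circle $\EE$ is required to select a single limiting phase $\lambda$. Writing $t_n:=\norm{\Ind_A-\lambda\Ind_B}$ (equal sizes $n$), which is again subadditive, Fekete gives $t_n/n\to c'$; since $t_n=o(s_n)$ along $n\approx N_k/2$ and (as I argue next) $c>0$, we get $c'=0$, hence $N(n,n,\lambda)=t_n/s_n\to0$ for \emph{every} $n$.

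\emph{Passing to all $p\neq q$ and to uniformity.} Here the triangle inequality is genuinely insufficient: the bulk of $u_n$ lives in a one-dimensional ``summing direction'' and the cancellation there is invisible to norm estimates of the individual disjoint pieces (for instance the naive telescoping $\sum\norm{u_{n+1}-u_n}$ diverges logarithmically). The clean route is to isolate this direction. I would first prove $c>0$ — equivalently, that the total-sum functional $\phi$, which by $1$-symmetry is the only candidate for a bounded symmetric functional, is bounded — using the balanced cancellation just obtained (it produces a nonzero weak limit of $(u_n)$ detected by $\phi$, whence $s_n\gtrsim n$). Then $\phi(u_n)\to 1/c$ with a fixed phase, one writes $u_n=\phi(u_n)\,v+r_n$ for a fixed unit vector $v$ in the summing direction, and the vanishing of the residual $\norm{r_n}\to0$ is exactly the content of $N(n,n,\lambda)\to0$. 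Consequently, for $p,q\ge m$,
\[
N(p,q,\lambda)\le\norm{r_p}+\norm{r_q}+\abs{\phi(u_p)-\lambda\,\phi(u_q)},
\]
and each summand tends to $0$ as $m\to\infty$; choosing $m$ large makes the right-hand side $\le\delta$ uniformly over all disjoint $A,B$ with $\abs A,\abs B\ge m$.

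\emph{Main obstacle.} I expect the two genuinely hard points to be, first, the extraction in the second paragraph — proving via the layer-cake reduction that $K_l=\infty$ really does produce $o(s_N)$ signed indicators, and upgrading these to balanced two-valued cancellation with the complex phase handled uniformly — and, second, the propagation in the third paragraph, where one must establish $c>0$ and isolate the summing direction, precisely because the triangle inequality cannot detect cancellation along the singular direction.
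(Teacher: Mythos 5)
Your proposal shares two ingredients with the paper's proof --- that $1$-symmetry makes $N(A,B,\lambda)$ a function of $(\abs{A},\abs{B},\lambda)$ alone, and that contractive averaging projections are the device for turning arbitrary vectors into multiples of indicators --- but the passage from non-unconditionality to cancellation has genuine gaps, located exactly at the two points you flag as ``hard'' without resolving them. First, the claim that failure of unconditionality produces signed flat vectors with $\norm{\Ind_{\varepsilon,\{1,\dots,N_k\}}}=o(s_{N_k})$, via an ``Abel-summation/layer-cake reduction,'' is not proved and cannot be carried out as described: the layer-cake estimate $\norm{\sum_n\varepsilon_n' \abs{a_n}\xx_n}\le\sum_k(\abs{a_k}-\abs{a_{k+1}})\norm{\Ind_{\varepsilon',\{1,\dots,k\}}}$ must be complemented by a reverse inequality of Marcinkiewicz type, $\norm{f}\gtrsim\sum_k(\abs{a_k}-\abs{a_{k+1}})\,s_k$, and that lower bound requires precisely the coordinate-projection control that a merely symmetric (non-unconditional) basis lacks. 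Note also that the statement you are trying to extract --- not unconditional implies failure of unconditionality for constant coefficients --- is what the paper \emph{deduces from} this lemma in Corollary~\ref{cor:symunc}, so you are in effect assuming the crux. Second, the final paragraph is circular or incomplete at each step: the functional $\phi$ cannot ``detect a nonzero weak limit of $(u_n)$'' before it is known to be bounded, and its boundedness is equivalent (via the averaging projections) to $c>0$, which is what you are trying to establish; the decomposition $u_n=\phi(u_n)v+r_n$ presupposes a limit vector $v$, i.e.\ norm convergence of $(u_n)$, which does not follow from the qualitative statement $N(n,n,\lambda)\to 0$ without summable rates (you concede the telescoping diverges); and the step ``discarding the $o(N_k)$ imbalance'' needs $s_r=o(s_{N_k})$ for $r=o(N_k)$, i.e.\ again $c>0$, before $c>0$ has been proved.

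All of this machinery is also unnecessary, because the lemma permits $\lambda$ and $m$ to depend on $\delta$: no limits, no Fekete argument, and no summing direction are needed. The paper's proof runs as follows. Failure of unconditionality yields finitely and disjointly supported $f,g$ with $\norm{f}=1$ and $\norm{f+g}\le\varepsilon$. For \emph{any} disjoint $A\supset\supp(f)$ and $B\supset\supp(g)$, the contractive averaging projections give $f_0=\Ave(f,A)\Ind_A$ and $g_0=\Ave(g,B)\Ind_B$ with $\norm{f_0+g_0}\le\varepsilon$ and $\norm{f_0},\norm{g_0}$ close to $1$; since $f_0/\norm{f_0}=\alpha\,\Ind_A/\norm{\Ind_A}$ and $g_0/\norm{g_0}=\beta\,\Ind_B/\norm{\Ind_B}$, where $\alpha,\beta$ are the signs of the coefficient sums of $f,g$ (this is how $\lambda=-\alpha^{-1}\beta$ arises, with no circle averaging needed in the complex case), the identity
\[
\frac{f_0}{\norm{f_0}}+\frac{g_0}{\norm{g_0}}=\frac{\norm{g_0}-\norm{f_0}}{\norm{f_0}\,\norm{g_0}}\,f_0+\frac{f_0+g_0}{\norm{g_0}}
\]
and the triangle inequality bound $N(A,B,\lambda)$ by a multiple of $\varepsilon$. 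Finally, $1$-symmetry transports the estimate to all disjoint pairs of cardinality at least $m=\max\{\abs{\supp(f)},\abs{\supp(g)}\}$. The lesson is that the cancellation should be harvested directly from the two disjointly supported vectors witnessing conditionality, rather than reconstructed asymptotically from sign patterns on indicators.
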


\begin{proof}
Fix $0<\varepsilon<1/2$ so that 
\[
\frac{4\varepsilon(1+\varepsilon)}{(1-\varepsilon)(1-2\varepsilon)} \le \delta .
\]
By the lack of unconditionality 
there are finitely disjointly supported vectors $f$ and $g$
such that $ \norm{f}=1$, $\norm{f+g}\le\varepsilon$. Let $\alpha$ and $\beta$ be the signs of the sums of the coefficients of $f$ and $g$, respectively. Pick $A\supset A_0 := \supp(f)$ and  $B\supset B_0 := \supp(g)$ disjoint and set
\[
f_0=\Ave(f,A) \Ind_A, \quad g_0=\Ave(g,B) \Ind_B.
\]
Since the averaging projections are contractive \cite{LT1}*{Proposition 3.a.4}, $\norm{f+g_0}\le \varepsilon$ and $\norm{f_0+g_0}\le \varepsilon$. The former inequality implies $1-\varepsilon\le \norm{g_0}\le 1+\varepsilon$ while the latter implies that $1-2\varepsilon\le \norm{f_0}\le 1+2\varepsilon$. 
The identity
\[
 \frac{\alpha\Ind_A}{\norm{\Ind_A}}+  \frac{\beta \Ind_B}{\norm{\Ind_B}}
 = \frac{f_0}{ \norm{f_0}}  + \frac{g_0}{\norm{g_0}} = 
\frac{ \norm{g_0}-\norm{f_0}}{ \norm{f_0} \norm{g_0}} f_0 + \frac{f_0 +g_0}{ \norm{g_0}} 
\]
yields the inequality
\[
\norm{ \alpha \frac{\Ind_A}{\norm{\Ind_A}} +\beta  \frac{ \Ind_B}{\norm{\Ind_B}}}\le  \frac{3\varepsilon(1+2\varepsilon)}{(1-2\varepsilon)(1-\varepsilon)} + \frac{\varepsilon}{1-\varepsilon} = \frac{4\varepsilon(1+\varepsilon)}{(1-\varepsilon)(1-2\varepsilon)} \le \delta,
\] 
so that  $A$ and $B$ satisfy \eqref{eq:uccbroken} with $\lambda=-\alpha^{-1}\beta$. Since the basis is $1$-symmetric, $N(A,B,\lambda)$ only depends on $\abs{A}$, $\abs{B}$, and $\lambda$  provided that $A$ and $B$ are disjoint. Hence \eqref{eq:uccbroken} still holds for $A$ and $B$ disjoint subsets of $\NN$ of finite cardinality of at least $m:=\max\{\abs{A_0},\abs{B_0}\}$.
\end{proof}

\begin{proposition}\label{p:can distort l1 only}
Suppose $\BB=(\xx_n)_{n=1}^\infty$ is a $1$-symmetric total basis in a Banach space $\XX$ which fails to be $1$-suppression unconditional. 
Then $\BB$ is equivalent to the canonical $\ell_1$-basis. 
\end{proposition}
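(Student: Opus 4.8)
The plan is to route everything through the \emph{fundamental function} of $\BB$. Since $\BB$ is $1$-symmetric, the quantity $\varphi(n):=\norm{\Ind_A}$ depends only on $n=\abs{A}$, and $\varphi$ is subadditive by the triangle inequality; hence by Fekete's lemma $L:=\lim_n \varphi(n)/n=\inf_n \varphi(n)/n$ exists. First I would observe that, being total and $1$-symmetric, $\BB$ is unconditional: this is Corollary~\ref{cor:symunc}, which in turn is what one gets by feeding totality into Lemma~\ref{lem:NotUncNotUCC}. In particular the lattice-unconditionality constant $K_l$ is finite, and since $1$-symmetry forces $\norm{\xx_n}$ to be constant, after normalizing, the upper estimate $\norm{\sum_n a_n\xx_n}\le\sum_n\abs{a_n}$ holds automatically. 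The proposition then reduces to two implications: (i) if $\BB$ is not $1$-suppression unconditional then $L>0$; and (ii) if $L>0$ then $\BB$ admits a lower $\ell_1$-estimate.

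The crux is (i), which I would establish in the contrapositive form $L=0\Rightarrow K_s=1$. Here I use that $K_s=1$ is equivalent to the two-vector inequality $\norm{y}\le\norm{y+z}$ for all finitely and disjointly supported $y,z$ (take $y=S_A(x)$ and $z=S_{A^c}(x)$). Given such $y,z$, set $A=\supp(y)$, $B=\supp(z)$, choose $B'\supseteq B$ with $B'\cap A=\emptyset$ and $\abs{B'}=N$, and let $z'$ be the average of $z$ over all permutations of $B'$; explicitly $z'=N^{-1}\big(\sum_{j\in B}\xx_j^*(z)\big)\Ind_{B'}$. Every such permutation fixes $A$, hence fixes $y$, so $1$-symmetry gives $\norm{y+z_\sigma}=\norm{y+z}$ for each permutation $\sigma$, and averaging the vectors yields the contraction $\norm{y+z'}\le\norm{y+z}$. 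The decisive computation is $\norm{z'}=\big|\sum_{j\in B}\xx_j^*(z)\big|\,\varphi(N)/N$, which tends to $0$ as $N\to\infty$ precisely because $L=0$. Letting $N\to\infty$ and invoking continuity of the norm gives $\norm{y}=\lim_N\norm{y+z'}\le\norm{y+z}$, as required.

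For (ii), the hypothesis $L>0$ gives $\varphi(n)\ge Ln$ for every $n$. Given $x=\sum_{i\in S}a_i\xx_i$ with $\abs{S}=n$, unconditionality lets me pass to the nonnegative vector $\sum_{i\in S}\abs{a_i}\xx_i$ at the cost of the factor $K_l$; averaging this vector over the permutations of $S$ (again a contraction) bounds it below by $\tfrac{\varphi(n)}{n}\sum_i\abs{a_i}\ge L\norm{x}_1$. Hence $\norm{x}\ge (L/K_l)\norm{x}_1$, which together with the upper estimate proves that $\BB$ is equivalent to the canonical $\ell_1$-basis.

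I expect the main obstacle to be step (i). The subtle point is that $\BB$ is only assumed permutation-symmetric, not sign-symmetric, so one cannot align signs for free and a naive reduction to constant-coefficient vectors loses information. The resolution is to recognize that \emph{maximal} uniform spreading of the block $z$ is itself a permutation-average --- therefore contractive even in the presence of the fixed vector $y$ --- and that the norm of the spread-out vector is controlled exactly by $\varphi(N)/N$. Thus the hypothesis $L=0$ is precisely the condition that makes this remainder disappear in the limit. A secondary, purely bookkeeping, difficulty is to make sure the permutations used genuinely fix $\supp(y)$ and to justify the reduction of the suppression constant to the two-vector inequality.
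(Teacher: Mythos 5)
Your argument is correct in substance, and its core --- steps (i) and (ii) --- takes a genuinely different route from the paper's. The paper derives the lower $\ell_1$-estimate by first showing that $\BB$ cannot be weakly null (a right-shift argument: if $\norm{x}>\norm{x+y}$ for disjointly supported $x,y$, shifting $y$ far to the right and using $1$-symmetry contradicts weak lower semicontinuity of the norm), then extracting a functional $x^*$ and a subsequence $(\xx_{n_i})_i$ with $\Re(x^*(\xx_{n_i}))\ge\alpha>0$, which bounds nonnegative combinations of that subsequence from below in $\ell_1$-norm; unconditionality then handles signs, and subsymmetry of unconditional symmetric bases (\cite{LT1}*{Proposition 3.a.3}) upgrades the subsequence to the whole basis. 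You replace all of this with the fundamental function: setting $\varphi(n)=\norm{\Ind_A}$ for $\abs{A}=n$ and $L=\inf_n\varphi(n)/n$ (which equals $\lim_n\varphi(n)/n$ by subadditivity), permutation-averaging --- contractive by $1$-symmetry and convexity --- shows that $L=0$ forces $K_s=1$, while $L>0$ together with finite $K_l$ gives $\norm{x}\ge (L/K_l)\norm{x}_1$ directly, with no subsequence extraction and no appeal to subsymmetry. Your spreading argument in (i) is sound (the permutations of $B'$ fix $\supp(y)$, so $\norm{y+z_\sigma}=\norm{y+z}$ and averaging contracts, while $\norm{z'}=\abs{\sum_{j\in B}\xx_j^*(z)}\,\varphi(N)/N\to 0$), and it uses only $1$-symmetry; this is cleaner and quantitative, whereas the paper's route detects $\ell_1$-behavior by more standard weak-convergence tools. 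Both proofs share the remaining ingredient: totality plus $1$-symmetry yields unconditionality.

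The one genuine flaw is how you invoke that remaining ingredient. You cite Corollary \ref{cor:symunc}, but in the paper the implication you need (total $\Rightarrow$ unconditional) is itself deduced \emph{from} Proposition \ref{p:can distort l1 only}, so as written your proof is circular within the paper's logical structure. Your parenthetical --- that this implication is ``what one gets by feeding totality into Lemma \ref{lem:NotUncNotUCC}'' --- is accurate, but it is not a one-line consequence of the lemma and must actually be run: take $(\delta_k)_k$ summable, let $m_k$ and $\lambda_k$ be as in Lemma \ref{lem:NotUncNotUCC}, choose pairwise disjoint finite sets $A_k$ with $\abs{A_k}\ge\max\{m_k,m_{k-1}\}$, and verify that $h_k=\enpar{\prod_{i<k}\lambda_i}\Ind_{A_k}/\norm{\Ind_{A_k}}$ is Cauchy of norm one while $\xx_n^*(h_k)\to 0$ for every $n$; the nonzero limit is then annihilated by all coordinate functionals, contradicting totality. (This telescoping construction is precisely the heart of the paper's own proof.) Once that paragraph is inserted, your proof is complete and non-circular. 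The points you dismiss as bookkeeping really are such: $K_s=1$ does reduce to the two-vector inequality on disjointly supported finitely supported vectors, since each $S_A$ with $A$ finite is a bounded finite-rank operator and finitely supported vectors are dense.
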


\begin{proof}
Suppose $\BB$ is a $1$-symmetric basis which is not $1$-suppression unconditional. 
Note first that $\BB$ cannot be weakly null. Indeed, suppose it were. As $\BB$ fails to be $1$-suppression unconditional, we can find finitely disjointly supported vectors $x$ and $y$  such that $\norm{x}>\norm{x+y}$. For each $p\in\NN$, let $y_p$ be the $p$-th right-shift of $y$, i.e., 
\[
y_p=\sum_{n=1}^\infty \xx_n^*(y) \xx_{n+p}.
\]
Since $y_p$ is weakly null, $\lim_p (x+y_p)=x$ weakly. Therefore,
\[
\norm{x} \le \limsup_p  \norm{x+y_p}.
\]
For $p$ large enough $x$ and $y_p$  are disjointly supported, whence, by symmetry,
$
\norm{x+y}= \norm{x+y_p}
$. Summing up, we obtain
\[
\norm{x} \le \norm{x+y},
\]
 a contradiction.

Thus, there exist a subsequence $\BB_0:=(\xx_{n_i})_{i=1}^\infty$ of $\BB$, a linear functional $x^*\in\XX^*$ with $\norm{x^*}=1$, and $\alpha>0$ such that
\[
\Re( x^*(\xx_{n_i}))\ge \alpha, \quad i\in\NN.
\]
Then for any sequence $(a_i)_{i=1}^\infty$ of nonnegative scalars, we have
\begin{equation}
\label{eq:positive l1}
\norm{ \sum_{i=1}^\infty a_i \xx_{n_i} } \geq \Re\enpar{ x^* \enpar{ \sum_{i=1}^\infty a_i \xx_{n_i}}} \geq \alpha \sum_{i=1}^\infty a_i.
\end{equation}

By way of contradiction, assume that $\BB$ is not an unconditional basis. 
Pick $(\delta_k)_{k=1}^\infty$ in $(0,\infty)$ with $\sum_{k=1}^\infty \delta_k<\infty$.
 For each $k\in\NN$, let $m_k=m(\delta_k)\in\NN$ and  $\lambda_k=\lambda(\delta_k)\in\EE$ be the numbers provided by  Lemma~\ref{lem:NotUncNotUCC}. Set $m_0=1$ and pick a sequence $(A_k)_{k=1}^\infty$ consisting of pairwise disjoint subsets of $\NN$ with  $\max\{m_{k},m_{k-1}\}  \le \abs{A_k}<\infty$. We then have
\[
\norm{\frac{ \Ind_{A_k}}{\norm{\Ind_{A_k}}}
-\lambda_k\frac{\Ind_{A_{k+1}}}{\norm{\Ind_{A_{k+1}}}}}\le \delta_k, \quad k\in\NN.
\]
It follows that the sequence $(h_k)_{k=1}^\infty$ defined by
\[
h_k= \enpar{\prod_{i=1}^{k-1} \lambda_i}  \frac{\Ind_{A_k}}{\norm{\Ind_{A_k}}}, \quad k\in\NN,
\]
 is Cauchy and does not converge to zero. Since $\lim_k \xx_n^*(h_k)=0$ for all $n\in\NN$, $\BB$ is not a total basis. This contradiction shows that $\BB$ is  unconditional. Therefore, by \eqref{eq:positive l1},  $\BB_0$ is equivalent to the canonical $\ell_1$-basis. Since unconditional symmetric bases are subsymmetric (see \cite{LT1}*{Proposition 3.a.3}), $\BB$ is also equivalent to the canonical $\ell_1$-basis.
\end{proof}

We rely on our analysis of symmetric bases (not necessarily Schauder) to generalize the aforementioned classical result that symmetric Schauder bases are unconditional. 

\begin{corollary}\label{cor:symunc}
Given a symmetric basis $\BB$ of a Banach space $\XX$, the following are equivalent
\begin{enumerate}[label=(\roman*), leftmargin=*]
    \item\label{symunc:a} $\BB$ is an unconditional basis.
    \item\label{symunc:d} $\BB$ is bidemocratic.
    \item\label{symunc:e} $\BB$ is super-democratic.
    \item\label{symunc:b} $\BB$ is unconditional for constant coefficients.
    \item\label{symunc:c} $\BB$ is a Markushevich  basis.
\end{enumerate}
\end{corollary}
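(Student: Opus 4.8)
The plan is to establish the cycle \ref{symunc:a}$\Rightarrow$\ref{symunc:d}$\Rightarrow$\ref{symunc:e}$\Rightarrow$\ref{symunc:b}$\Rightarrow$\ref{symunc:a}, together with the equivalence \ref{symunc:a}$\Leftrightarrow$\ref{symunc:c}. Since unconditionality, totality, super-/bi\-demo\-cracy, and unconditionality for constant coefficients are all preserved under an equivalent renorming, and since every symmetric basis is $1$-symmetric after a suitable renorming, I would assume throughout that $\BB$ is $1$-symmetric; recall that then $\BB^*$ is also symmetric and seminormalized, so I may write $\varphi(n)=\norm{\Ind_A}$ for $\abs{A}=n$ and $\varphi^*(n)=\norm{\Ind_{\delta,B}[\BB^*,\XX^*]}$ for $\abs{B}=n$, both well defined up to constants. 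The ``downward'' implications are the routine ones. For \ref{symunc:a}$\Rightarrow$\ref{symunc:d}, unconditionality yields the standard duality $\varphi(m)\varphi^*(m)\le Km$ and the bounds $\norm{\Ind_{\varepsilon,A}}\le C_1\varphi(m)$, $\norm{\Ind_{\delta,B}[\BB^*,\XX^*]}\le C_2\varphi^*(m)$ for $\abs{A},\abs{B}\le m$, so the product is $\le C_1C_2Km$. For \ref{symunc:d}$\Rightarrow$\ref{symunc:e}, biorthogonality gives $\abs{B}=\Ind_{\overline{\delta},B}[\BB^*,\XX^*](\Ind_{\delta,B})\le\norm{\Ind_{\overline{\delta},B}[\BB^*,\XX^*]}\,\norm{\Ind_{\delta,B}}$, whence for $\abs{A}\le\abs{B}=m$ bidemocracy forces $\norm{\Ind_{\varepsilon,A}}\le Cm/\norm{\Ind_{\overline{\delta},B}[\BB^*,\XX^*]}\le C\norm{\Ind_{\delta,B}}$. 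Finally \ref{symunc:e}$\Rightarrow$\ref{symunc:b} is the special case $A=B$ of super-democracy.

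The genuinely new step is \ref{symunc:b}$\Rightarrow$\ref{symunc:a}, and I would prove its contrapositive with Lemma~\ref{lem:NotUncNotUCC}. If $\BB$ is not unconditional, then for each $\delta>0$ the lemma supplies $\lambda\in\EE$ and disjoint sets $A$, $B$ of equal cardinality $k$ as large as desired with $N(A,B,\lambda)\le\delta$; by $1$-symmetry this reads $\norm{\Ind_A-\lambda\Ind_B}\le\delta\,\varphi(k)$. On the single set $C=A\cup B$ consider the two unimodular sign vectors $\eta$, equal to $1$ on $A$ and $-\lambda$ on $B$, and $\eta'$, equal to $1$ on $A$ and $\lambda$ on $B$. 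Then $\norm{\Ind_{\eta,C}}=\norm{\Ind_A-\lambda\Ind_B}\le\delta\,\varphi(k)$, while from $\Ind_{\eta',C}=2\Ind_A-(\Ind_A-\lambda\Ind_B)$ I get $\norm{\Ind_{\eta',C}}\ge 2\varphi(k)-\delta\,\varphi(k)$. Since $\varphi(k)>0$ (the basis is seminormalized), the ratio $\norm{\Ind_{\eta',C}}/\norm{\Ind_{\eta,C}}\ge(2-\delta)/\delta$ is unbounded as $\delta\downarrow 0$, so $\BB$ fails to be unconditional for constant coefficients. The point worth stressing is that this comparison takes place on one and the same set $C$, so no monotonicity or regularity of the fundamental function $\varphi$ is required.

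It remains to close the loop through \ref{symunc:c}. The implication \ref{symunc:a}$\Rightarrow$\ref{symunc:c} is immediate: if $\xx_n^*(x)=0$ for all $n$ then $x=\sum_n\xx_n^*(x)\,\xx_n=0$, so the pre-annihilator of $\operatorname{span}\BB^*$ is trivial, i.e.\ $\BB$ is total. The reverse implication \ref{symunc:c}$\Rightarrow$\ref{symunc:a} is exactly the content already isolated inside the proof of Proposition~\ref{p:can distort l1 only}: there it is shown that a $1$-symmetric basis which fails to be unconditional cannot be total, since one builds a Cauchy sequence $(h_k)$ that is bounded away from $0$ yet satisfies $\xx_n^*(h_k)\to 0$ for every $n$. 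Reading this contrapositively gives total $\Rightarrow$ unconditional, and completes the equivalence of all five conditions.

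The main obstacle is concentrated entirely in \ref{symunc:b}$\Rightarrow$\ref{symunc:a}: one must extract a quantitative witness of non-unconditionality starting from the \emph{weakest} of the five failures, namely the failure of unconditionality for constant coefficients. Lemma~\ref{lem:NotUncNotUCC} is precisely what converts an abstract pair of disjointly supported vectors realizing conditionality into a flat, sign-controlled configuration on signed indicators, and the two-sign comparison above then trivializes the passage to the constant-coefficient setting. Everything else is either classical duality for symmetric bases or a verbatim reuse of the argument already carried out for Proposition~\ref{p:can distort l1 only}.
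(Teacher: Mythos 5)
Your proposal is correct and takes essentially the same route as the paper: the identical cycle \ref{symunc:a}$\Rightarrow$\ref{symunc:d}$\Rightarrow$\ref{symunc:e}$\Rightarrow$\ref{symunc:b}$\Rightarrow$\ref{symunc:a} closed off through \ref{symunc:c}, with Lemma~\ref{lem:NotUncNotUCC} driving \ref{symunc:b}$\Rightarrow$\ref{symunc:a} and the argument of Proposition~\ref{p:can distort l1 only} (whose internal Cauchy-sequence step you correctly re-extract, where the paper instead invokes the proposition as stated) giving \ref{symunc:c}$\Rightarrow$\ref{symunc:a}. The remaining differences are cosmetic: you spell out the two-sign comparison $\norm{\Ind_{\eta',C}}/\norm{\Ind_{\eta,C}}\ge(2-\delta)/\delta$ and the standard symmetric-basis duality estimates that the paper delegates to the references \cite{LT1} and \cite{AABW}.
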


\begin{proof}
Using \cite{LT1}*{Proposition 3.a.6}), \ref{symunc:a} implies \ref{symunc:d}. 
By \cite{AABW}*{Section 5}, \ref{symunc:d} implies \ref{symunc:e}, and  \ref{symunc:e} implies \ref{symunc:b}. To establish \ref{symunc:b} $\Rightarrow$ \ref{symunc:a}, suppose \ref{symunc:a} fails. By renorming, we can assume that $\BB$ is $1$-symmetric. 
 Given $\delta>0$, use Lemma~\ref{lem:NotUncNotUCC} to pick  $\lambda\in \EE$, and $A$ and $B$ disjoint subsets of $\NN$ with $ \abs{A}=\abs{B}<\infty$ and $N(A,B,\lambda)\le \delta$. Since, by $1$-symmetry, $\norm{\Ind_A}=\norm{\Ind_B}$, 
\[
\norm{\Ind_A -\lambda\Ind_B} \le  \delta\norm{\Ind_A}.
\]
Therefore, \ref{symunc:b}  is false as well.

Clearly, \ref{symunc:a} implies \ref{symunc:c}.  Finally, if \ref{symunc:c} holds, by Proposition~\ref{p:can distort l1 only}, $\BB$ is either $1$-suppression unconditional or equivalent to the unit vector system of $\ell_1$. In both cases \ref{symunc:a} holds.
\end{proof}

\begin{example}\label{ex:ConditionalSymmetric}
Here we provide an example of a $1$-symmetric basis  which is not unconditional (hence not total, by Corollary \ref{cor:symunc}). 

Let $\ww=(w_n)_{n=1}^\infty$ be a weight.  For $f=(a_n)_{n=1}^\infty\in c_{00}$, put
\begin{equation}
    \label{eq:symmetric}
\norm{f}=\bphi_2(\emptyset;f)=\sup_{\varphi\in \Pi} \abs{\sum_{j=1}^\infty a_{\varphi(j)} w_j}.
\end{equation}
Clearly, $\norm{\cdot}$ is a semi-norm that satisfies $\max\{ \norm{\Re(f)}, \norm{\Im(f)}\} \le \norm{f}$ for all $f\in c_{00}$. Moreover, if $f=(a_j)_{j=1}^\infty$ is real-valued,
\[
\norm{f}=\max\enbrace{ \Psi(f),  \Psi(-f)},
\]
where
\[
\Psi(f)=\sum_{n=1}^\infty  b_j^+ w_j -\ww_\infty \sum_{n=1}^\infty  b_j^-,
\]
and $(b_j^+)_{j=1}^\infty$ and $(b_j^-)_{j=1}^\infty$ denote the nonincreasing rearrangements of $f^+$ and $f^-$, respectively. Then,
\[
\Theta(f):=\sum_{j=1}^\infty (b_j^+ + b_j^-)(w_j-\ww_\infty)=\Psi(f)+\Psi(-f) \le 2 \norm{f}.
\]
This way, if $\ww$ is not constant, 
\[
\abs{a_m}\le b_1^++b_1^- \le \frac{\Theta(f)}{1-\ww_\infty}\le \frac{2}{1-\ww_\infty} \norm{f}, \quad m\in\NN.
\]
As a consequence, even if $f=(a_j)_{j=1}^\infty$ is not real-valued, 
\begin{equation}
\abs{a_m}  \le \frac{2}{1-\ww_\infty} \norm{\Re  \enpar{ \frac{f}{\sgn(a_m)}}}\   \le \frac{2}{1-\ww_\infty} \norm{f}, \quad m\in\NN.
\label{norm of dual}
\end{equation}
We deduce that  $\norm{\cdot}$ is a norm on $c_{00}$ whose completion is a Banach space, say $\XX$, for which the unit vector system (which we denote by $(\ee_j)_{j=1}^\infty = \EB$) is $1$-symmetric. 
From the definition of $\| \cdot \|$ we see that $\|\ee_m\| = 1$ for all $m$, while \eqref{norm of dual} shows that the biorthogonal functionals satisfy $\|\ee_m^*\| \leq 2/(1-\ww_\infty)$.

If $\ww_\infty=0$, 
\[
\frac{1}{2}\norm{f}_{1,\ww}\le \frac{1}{2} \Theta(f)\le \norm{f} \le\norm{f}_{1,\ww}
\]
for all real-valued $f\in c_{00}$.
This gives us a renorming of $d_{1,\ww}$.  

Let us show that if $\ww_\infty>0$, $\EB$ is not unconditional for constant coefficients (UCC for short).  Let $(s_n)_{n=1}^\infty$ be the primitive weight of $\ww$. Set  $\varepsilon=((-1)^n)_{n=1}^\infty$ and, for each $m\in\NN$, $A_m=\{j\in\NN \colon j \le 2m\}$. If  $\EB$ were UCC, then there would be a constant $C\in[1,\infty)$ such that
\[
s_{2m}= \norm{\Ind_{A_m}} \le C \norm{\Ind_{\varepsilon,A_m}}=C ( s_m - m\ww_\infty).
\]
Hence, if we set $t_n=s_n/n$ for all $n\in\NN$,
\[
2 t_{2m} \le C( t_m-\ww_\infty), \quad m\in\NN.
\]
Since $\lim_m t_m=\ww_\infty$, we reach the absurdity $2\ww_\infty\le 0$.
\end{example}

The study of the lattice-unconditional constants of  bases with  Property (A) is also of interest. Any $1$-greedy basis is $1$-suppression unconditional, hence $2$-lattice unconditional for real Banach spaces. Conversely, \cite{DOSZ}*{Theorem 4.1} shows that the constant $2$ is optimal, i.e., there is a $1$-greedy basis which is not $C$-lattice-unconditional for any $C\le 2$.  The basis constructed in the proof of Proposition \ref{p:renorming closer to optimal} is $2$-suppression unconditional by Proposition \ref{p:compare norms}, hence $4$-unconditional.

On the other hand, taking $\ww = (1, w_n, w_n, \ldots)$ and comparing the norms in $\DL_n:=\DL_{\ww}$ of $g_n = \ee_1 + w_n( \Ind_{A_n} - \Ind_{B_n})$ and $h_n = \ee_1 + w_n( \Ind_{A_n} + \Ind_{B_n})$, where $w_n$, $A_n$ and $B_n$ are as in the proof of Proposition~\ref{p:renormingclosetotwo}, we see that the unconditionality constant of the unit vector system of $\DL_n$ is at least
\[
\frac{\norm{h_n}_{\DL,\ww}}{\norm{g_n}_{\DL,\ww}}=1+2nw_n^2= 1+(1-2\omega_n)\frac{2n}{2n+1} \underset{n \to \infty}{\longrightarrow} 2^-.
\]
Therefore, for every $s < 2$ there exists a sufficiently large $n$ such that the unconditionality constant of the canonical basis of $\DL_n$ is greater than $s$. We do not know whether $2$ or the upper estimate for the unconditionality constant we have achieved (that is, $4$) is closer to its actual value.

\begin{question}\label{Q4} Does an unconditional basis with Property (A) always have $K_l \le 2$? Does it have $K_s\le 2$? Does it have $K_s\le D$ for some universal constant $D$?
\end{question}



\section{Remarks on Property (A) in the finite dimensional case}\label{s:fin dim}\noindent

\noindent Property (A), like any other greedy-like property for that matter, can be defined for bases in finite dimensional spaces with the obvious adjustments. 

If a basis has Property (A), then any (finite or infinite) subbasis inherits this feature. However, unlike other greedy-like properties, Property (A) does not pass to direct sums even when the fundamental functions of the summands are the same. Therefore, it is by no means clear how an infinite basis with Property (A) can be constructed from finite bases  with Property (A). In this regard, it is worth mentioning that we can 
infer finite dimensional results from the infinite dimensional ones.

\begin{proposition}\label{p:PAFinite}
For each $d\in\NN$, $d\ge 2$, there is a $d$-dimensional Banach space with a basis  that satisfies Property (A) and has suppression unconditionality constant $K_s \ge 10/9$.
\end{proposition}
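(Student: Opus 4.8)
The plan is to obtain the finite-dimensional example as a restriction of the infinite-dimensional construction from Proposition~\ref{p:renormingclosetotwo}, exploiting the fact that Property (A) is inherited by subbases. Concretely, I would fix the weight $\ww = (1, \omega, \omega, \ldots)$ with $\omega = \omega_1 = 1/3$, which is the value that in the proof of Proposition~\ref{p:renorming closer to optimal} yields $K_s \ge K_1 = 10/9$ via vectors supported on three coordinates. The key observation is that the witnessing vectors $f_{1,\omega} = \ee_1 + \omega \ee_2$ and $g_{1,\omega} = \ee_1 + \omega \ee_2 - \omega \ee_3$ (in the notation of that proof, with $n=1$, so $A_1 = \{2\}$ and $B_1 = \{3\}$) are both supported on $\{1,2,3\}$, and the relevant norm computation only involved coordinate projections $S_A$ with $A \subset \{1,2,3\}$.

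First I would let $\XX_d$ be the span of $\ee_1, \dots, \ee_d$ inside $\DL_\ww$, equipped with the restriction of $\norm{\cdot}_{\DL,\ww}$, so that the unit vector system $(\ee_j)_{j=1}^d$ is a basis of this $d$-dimensional space. Since $(\ee_j)_{j=1}^\infty$ has Property (A) in $\DL_\ww$ by Proposition~\ref{p:bidemocratic}, and Property (A) passes to subbases (as noted at the start of this section), the finite subbasis $(\ee_j)_{j=1}^d$ also has Property (A); this handles the first assertion for every $d \ge 2$. For the lower bound on the suppression unconditionality constant, I would take $d \ge 3$ and reuse the computation from Proposition~\ref{p:renorming closer to optimal} with $n=1$: by Lemma~\ref{lem:PropNorm}\ref{P:14} one has $\norm{f_{1,\omega}}_{\DL,\ww} = 1 + \omega^2$, and the convexity argument there gives $\norm{g_{1,\omega}}_{\DL,\ww} = \max\{1, (1+\omega)^2/2 + \omega^2\}$, so that with $\omega = 1/3$ the ratio $\norm{f_{1,\omega}}/\norm{g_{1,\omega}} = K_1 = 10/9$. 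Taking $A = \{1,2\}$ so that $S_A(g_{1,\omega}) = f_{1,\omega}$ yields $\norm{S_A(g_{1,\omega})}/\norm{g_{1,\omega}} = 10/9$, hence $K_s \ge 10/9$ for $\XX_d$ whenever $d \ge 3$.

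The only genuinely separate case is $d = 2$, since the witnessing vectors live on three coordinates. For $d = 2$ the claim about $K_s$ is not asserted by this proposition in a way that needs the three-coordinate example — and in fact, by Remark~\ref{optimality for 2}, any vector supported on at most two basis elements satisfies $\norm{S_A(g)} \le \norm{g}$, so a two-dimensional space with Property (A) is automatically $1$-suppression unconditional. I therefore expect the statement to be read as asserting Property (A) for all $d \ge 2$ together with the bound $K_s \ge 10/9$ for $d \ge 3$; the $d=2$ case contributes only the Property (A) assertion. The main subtlety to get right is thus not any hard estimate but rather the bookkeeping: verifying that restricting to a finite-dimensional coordinate subspace genuinely preserves both Property (A) and the exact norm values of $f_{1,\omega}$ and $g_{1,\omega}$, which follows because the suprema defining $\norm{\cdot}_{\DL,\ww}$ for vectors supported on $\{1,2,3\}$ range (by Lemma~\ref{lem:PB}) only over sets $E \subset \supp$, all contained in $\{1,2,3\} \subset \{1,\dots,d\}$.
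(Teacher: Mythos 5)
For $d\ge 3$ your construction coincides with the paper's: the paper equips $\RR^d$ with $\norm{x}_d=\norm{J_d(x)}_{\DL,\ww}$ for $\ww=(1,1/3,1/3,\dots)$, obtains Property (A) from Proposition~\ref{p:bidemocratic} together with the observation (made at the start of Section~\ref{s:fin dim}) that subbases inherit Property (A), and gets $K_s\ge 10/9$ by running the proof of Proposition~\ref{p:renorming closer to optimal} with $n=1$, i.e., with exactly your vectors $f_{1,1/3}$, $g_{1,1/3}$ and the projection onto $\{1,2\}$. That part of your proposal is correct.

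The genuine gap is the case $d=2$. The proposition asserts $K_s\ge 10/9$ for \emph{every} $d\ge 2$, and the paper proves it for $d=2$ by a separate, ad hoc construction: the norm $\norm{a_1\ee_1+a_2\ee_2}=\max\enbrace{\alpha\abs{a_1},\alpha\abs{a_2},\abs{a_1+a_2}}$ on $\RR^2$, whose unit ball is a hexagon, for which $\norm{\alpha^{-1}(\ee_1-\ee_2)}=1$ while $\norm{\alpha^{-1}\ee_1}=\alpha^{-1}$, so $K_s\ge\alpha^{-1}\ge 10/9$ once $\alpha\le 9/10$. You instead argue that no two-dimensional example is possible, citing Remark~\ref{optimality for 2}; but that remark's key step, $\norm{a\xx_k+b\xx_n}=\norm{-a\xx_k+b\xx_n}$, is extracted from Property (A) by moving the unimodular coefficient to a \emph{third} position $m\notin\{k,n\}$ and back with the opposite sign --- the definition of Property (A) only compares $\Ind_{\varepsilon,A}+f$ with $\Ind_{\varepsilon,B}+f$ for $B$ disjoint from both $A$ and $\supp(f)$, so a sign can never be flipped in place. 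When the whole basis is $\{\xx_1,\xx_2\}$ there is no third position, the remark's argument is unavailable, and its conclusion is not valid there: in dimension two the exchange conditions of Property (A) with $A,B$ nonempty collapse to $\norm{\pm\ee_1}=\norm{\pm\ee_2}$, which is precisely why the paper says that for $d=2$ Property (A) ``gives no information beyond normalization'' and why its hexagonal norm is accepted as having Property (A) despite $K_s\ge\alpha^{-1}$ being arbitrarily large. So the $d=2$ case requires --- and in the paper receives --- its own example; your proposal omits it, and your proposed re-reading of the statement (the $K_s$ bound only for $d\ge 3$) is neither what the proposition claims nor what the paper proves.
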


Indeed, for $d\ge 3$ we consider $\RR^d$ with the norm
\[
\norm{x}_d=\norm{J_d(x)}_{\DL,\ww}, \quad x\in\RR^d,
\]
where $\ww = (1, 1/3, 1/3, \ldots)$ 
and $J_d\colon\RR^d \to c_{00}$ is the natural embedding.
In this situation, by Lemma~\ref{lem:PB},
\[ \norm{(a_j)_{j=1}^d}_d= \max\enbrace{ \frac{n+2}{3n}\sum_{j=1}^n \abs{a_{\pi(j)} } +\frac{1}{3} \abs{\sum_{j=n+1}^d a_{\pi(j)} } },\]
where the maximum is taken over $n=1$, \dots, $d$ and all permutations $\pi$ of $\{1, \ldots, d\}$.
The proof of Proposition \ref{p:renorming closer to optimal} shows that $K_s \geq 10/9$.

When $d=2$, Property (A) gives us no information (beyond normalization); hence we do not expect to attain $1$-suppression unconditionality. For example, fix $\alpha \in (0,1)$ and set
\[
\norm{ a_1 \ee_1 + a_2 \ee_2 }= \max \enbrace{ \alpha \abs{a_1}, \alpha \abs{a_2}, \abs{a_1 + a_2} }.
\]
The unit ball of the resulting normed space is the $6$-gon with vertices $\pm (-\alpha^{-1}, \alpha^{-1})$, $\pm (1-\alpha^{-1}, \alpha^{-1})$, and $\pm (\alpha^{-1}, 1-\alpha^{-1})$. Then 
\[\|\alpha^{-1} (\ee_1 - \ee_2)\| = 1,\quad \text{but}\quad \|\alpha^{-1} \ee_1\| = \alpha^{-1},
\]
 so the suppression unconditionality constant is at least $\alpha^{-1}$.
 
 Finally, we show that in the finite-dimensional setting Property (A) does not pass to the dual basis. 
 We do not know whether this still holds in infinite-dimensional spaces.

\begin{proposition} \label{bad dual}
 For $d \geq 3$ the space $\RR^d$ admits a norm $\norm{\cdot}$ so that:
 \begin{enumerate}[leftmargin=*]
  \item The unit vector system is $\EB=(\ee_j)_{j=1}^d$ and its dual basis $\EB^*=(\ee_j^*)_{j=1}^d$ are normalized and $1$-symmetric.
  \item The basis $\EB$ has Property (A).
  \item The dual basis $\EB^*$ fails Property (A). 
 \end{enumerate}
\end{proposition}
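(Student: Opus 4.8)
The plan is to exhibit an explicit permutation-invariant norm $\rho$ on $\RR^d$, normalized so that $\rho(\ee_j)=1$ and (as in Lemma~\ref{lem:PropNorm}\ref{P:3}) so that $\rho(v)\ge\max_j\abs{\ee_j^*(v)}$, and then to read off the three assertions from the geometry of its unit ball $B=B_\rho$. Assertion (1) is essentially free: permutation invariance of $\rho$ passes to the dual norm $\rho^*$, so both $\EB$ and $\EB^*$ are $1$-symmetric; and the domination $\rho\ge\norm{\cdot}_\infty$ forces $\rho^*(\ee_j^*)=\sup_{\rho(v)\le 1}v_j=1$, so both systems are normalized. The whole content is therefore in (2) and (3).

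The observation that organizes the construction is that Property (A) fails to be self-dual already at the crudest level of a single sign flip. Testing \eqref{eq:SLC} with $A=\{1\}$, $B=\{2\}$, $f=t\,\ee_3$ for $0<t\le 1$ and using $1$-symmetry to interchange the first two coordinates, one sees that a $1$-symmetric basis with Property (A) must satisfy $\norm{\ee_1+t\,\ee_3}=\norm{-\ee_1+t\,\ee_3}$ for all $\abs{t}\le1$; by homogeneity this says exactly that the restriction of $\rho$ to each two-dimensional coordinate subspace is unconditional, i.e. that the coordinate \emph{slice} $B\cap\{v_3=0\}$ is invariant under $v_2\mapsto -v_2$. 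Conversely, $\EB^*$ fails Property (A) as soon as $\rho^*$ is not unconditional on a two-dimensional coordinate subspace: if $\rho^*(\ee_2^*+t\,\ee_3^*)>\rho^*(-\ee_2^*+t\,\ee_3^*)$ for some admissible $t$, then $A=\{1\}$, $B=\{2\}$, $f=t\,\ee_3^*$ and the sign choice $\varepsilon_1=+1$, $\varepsilon_2=-1$ violate \eqref{eq:SLC} for $\EB^*$, since $\rho^*(\ee_1^*+t\,\ee_3^*)=\rho^*(\ee_2^*+t\,\ee_3^*)$ by symmetry. The point is that $\rho^*$ restricted to a coordinate plane is the support function of the \emph{shadow} $\mathrm{proj}(B)$ of $B$ onto that plane, whereas the condition forced on $\rho$ concerns the \emph{slice} $B\cap\{v_3=0\}$; and for a centrally symmetric, permutation-invariant body a symmetric slice and a sign-asymmetric shadow can coexist — the body may bulge in the $\pm v_3$ direction more over the diagonal $v_1=v_2$ than over $v_1=-v_2$, producing a shadow that is longer along one diagonal while the hyperplane slice stays symmetric. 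Thus there is no obstruction to having $\EB$ satisfy the slice condition needed for Property (A) while $\rho^*$ is sign-asymmetric on a coordinate plane, which destroys Property (A) for $\EB^*$.

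Accordingly the construction is a permutation-invariant norm whose unit ball has sign-symmetric coordinate slices but a sign-asymmetric coordinate shadow; concretely this is arranged as a controlled modification of the $\DL_\ww$ norm of Section~\ref{s:suppression unconditionality} (whose shadow is symmetric), enlarging the ball only along the diagonals $v_i=v_j$ off the coordinate hyperplanes and leaving the hyperplane slices intact. One then computes $\rho^*$ on the finitely many relevant vectors supported on three coordinates and displays the explicit witness $\rho^*(\ee_1^*+t\,\ee_3^*)>\rho^*(-\ee_2^*+t\,\ee_3^*)$, which is (3). The passage to general $d\ge 3$ is immediate, since $\rho$ is given by the same permutation-invariant formula and vectors supported on three coordinates are unaffected by the ambient dimension.

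The hard part is (2): one must verify that $\EB$ has Property (A) in \emph{full}, not merely the two-dimensional sign symmetry extracted above. Here the $\DL_\ww$-type control is essential — the perturbation must be small enough that the analogues of Lemma~\ref{lem:PA} and of the computation behind Proposition~\ref{p:bidemocratic} still go through, so that for every $f$ the supremum defining $\rho$ is attained on sets containing $E_f$ and the comparison \eqref{eq:SLC} with $C=1$ survives for all admissible configurations. I expect the main obstacle to be exactly this balancing act: perturbing the shadow enough to break unconditionality of $\rho^*$ on a coordinate plane, yet perturbing the primal norm little enough that the extremal structure guaranteeing Property (A) for $\EB$ is preserved, together with the bookkeeping of the dual-norm computation that pins down the witnessing triple in (3).
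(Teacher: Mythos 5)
Your outline does identify the right mechanism, and it is in fact the mechanism of the paper's example: a permutation-invariant norm that agrees with the $\ell_1$-norm on every coordinate hyperplane (so all coordinate slices are sign-symmetric) but whose ball is enlarged, in a sign-dependent way, at vectors of full support. Your two reductions are also correct: Property (A) together with $1$-symmetry forces sign-invariance on two-dimensional coordinate subspaces, and sign-asymmetry of the dual norm (more generally, failure of $1$-superdemocracy) rules out Property (A) for $\EB^*$. The genuine gap is that the proposal never produces a proof. No norm is defined: ``a controlled modification of the $\DL_\ww$ norm, enlarging the ball only along the diagonals $v_i=v_j$ off the coordinate hyperplanes'' is a wish list, not a formula, and none of the three substantive claims about it is verified --- that a convex, permutation-invariant enlargement leaving the hyperplane slices intact exists at all; that its dual norm really becomes sign-asymmetric; and, above all, that the enlarged ball keeps Property (A) in full, which you yourself defer (``I expect the main obstacle to be exactly this balancing act''). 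Since the simultaneous coexistence of these features is precisely the assertion of Proposition~\ref{bad dual}, what you have written is a restatement of the problem plus a plausibility argument, not a solution.

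For comparison, the paper settles the balancing act with a single explicit formula, \eqref{eq:define norm}: $\norm{(a_j)_{j=1}^d}=\sup_{\pi}\left\{\abs{a_{\pi(1)}+a_{\pi(d)}/3}+\sum_{j=2}^{d-1}\abs{a_{\pi(j)}}\right\}$. The decisive feature is that every vector with a vanishing coordinate has norm exactly $\sum_{j}\abs{a_j}$; since in \eqref{eq:SLC} with $B\neq\emptyset$ the vector $\Ind_{\varepsilon,A}+f$ vanishes on $B$, verifying Property (A) for $\EB$ essentially reduces to comparing $\ell_1$-norms (the only case needing care is $A=\emptyset$ with $\Ind_{\varepsilon,B}+f$ of full support, handled by noting that for real vectors with all coefficients of modulus at most $1$ and $d\ge3$ the norm differs from $\norm{\cdot}_1$ by at most $2/3$). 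Failure for $\EB^*$ is exhibited on explicit functionals: $h^*=\ee_1^*-\sum_{j=2}^{d-1}\ee_j^*$ has norm at most $1$ (it is a convex combination of the canonical norming functionals), while $g^*=\sum_{j=1}^{d-1}\ee_j^*$ has norm greater than $1$ (test against $g=\sum_{j=1}^{d-1}\ee_j-\ee_d/2$, whose norm is $d-7/6$), so $\EB^*$ is not $1$-superdemocratic. Finally, a concrete error in your plan: the claim that the passage to general $d\ge3$ is ``immediate, since vectors supported on three coordinates are unaffected by the ambient dimension'' is false for norms of this kind --- in the paper's norm, $(1,-1,-1)$ has norm $7/3$ in $\RR^3$, while $(1,-1,-1,0)$ has norm $3$ in $\RR^4$ --- and correspondingly the paper's witnesses for (3) are supported on $d-1$ coordinates and grow with $d$; so even that step of your plan would require an argument rather than a remark.
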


In fact the proof shows that for $A = \{1, \ldots, d-1\}$ there exist $\varepsilon, \delta \in \EE^A$ so that $\norm{ \Ind^*_{\varepsilon,A} } \neq \norm{ \Ind^*_{\delta,A} }$.

\begin{proof}
 For $(a_j)_{j=1}^d \in\RR^d$ we define
 \begin{equation}
\norm{(a_j)_{j=1}^d} =\norm{ \sum_{j=1}^d  a_j \, \ee_j } := \sup_{\pi \in \Pi_d} \enbrace{\abs{ a_{\pi(1)} + \frac{a_{\pi(d)}}3 } + \sum_{j=2}^{d-1} \abs{a_{\pi(j)}}},
 \label{eq:define norm}
 \end{equation}
 where  $\Pi_d$ denotes the set consisting of all permutations of $\{1, \ldots, d\}$. Clearly both $\EB$ and $\EB^*$ are $1$-symmetric. Moreover, 
 \[
 \sup_j \abs{a_j} \leq \norm{ \sum_{j=1}^d a_j\, \ee_j } \leq \sum_{j=1}^d \abs{a_j},
 \]
 so the bases $\EB$ and $\EB^*$ are normalized. If some of the coefficients $a_j$ are equal to $0$ then $\norm{\sum_{j=1}^d a_j \, \ee_j } = \sum_{j=1}^d \abs{a_j}$ and the basis $\EB$ has Property (A).  We shall complete the proof by showing that the functionals
 \[
 h^*= \ee_1^* - \sum_{i=2}^{d-1} \ee_j^*, \quad g^*= \sum_{j=1}^{d-1} \ee_j^*
 \] 
 satisfy $\norm{h^*} \le 1 <\norm{g^*}$. This will prove that the basis $\EB^*$ is not $1$-superdemocratic and so it fails Property (A). Here, a basis is said to be $1$-superdemocratic if $\norm{\Ind^*_{\varepsilon,A}} = \norm{ \Ind^*_{\delta,B} }$ for all $\varepsilon, \delta\in \mathbb{E}$ and for all $A, B\subset\mathbb{N}$ with $|A| = |B|$.
 
 To estimate the norm of $h^*$, note that 
\[
\norm{f}=\sup_{f^*\in\Ft} \abs{f^*(f)}, \quad f\in\RR^d,
\]
where $\Ft$ consists of all functionals of the form
\[
\varepsilon_1 \left( \ee_{\pi(1)}^* + \frac{\ee_{\pi(d)}^*}3 \right) + \sum_{j=2}^{d-1} \varepsilon_j \, \ee_{\pi(j)}^* , \quad \varepsilon=(\varepsilon_j)_{j=1}^{d-1}\in \EE^{d-1}, \, \pi\in\Pi_d.
\]
Therefore, $\norm{f^*}\le 1$ for all $f^*\in\Ft$. The identity
\begin{equation*}
h^*= \frac12 \left( \left( \ee_1^*+  \frac{\ee_d^*}3 - \sum_{j=2}^{d-1} \ee_j^* \right) + \left(-\ee_{d-1}^* - \frac{\ee_d^*}3 +\ee_1^* - \sum_{j=2}^{d-2} \ee_j^*  \right) \right),
\end{equation*}
tells us that the functional $h^*$ belongs to the convex hull of $\Ft$ and so $\norm{h^*}\le 1$.
To estimate the norm of $g^*$, we test it on
\[
g= \sum_{j=1}^{d-1} \ee_j - \frac{\ee_d}2 .
\]
As $g^*(g) = d-1$, it suffices to prove that $\norm{g} < d-1$. Given $\pi\in\Pi_d$, we  compute the number on the right side of \eqref{eq:define norm} depending on whether $\pi(d)=d$ or not. In the former case, the computation gives
\[
\abs{ 1 - \frac16 } + (d-2) = d-1-\frac16,
\]
while in the latter case it produces
\[
\abs{ 1 + \frac13 } + (d-3) + \frac12 = d-1-\frac16 .
\]
Thus $\norm{g} = d-7/6 < d-1$ as desired.
\end{proof}

\begin{bibsection}
\begin{biblist}

\bib{AlbiacAnsorena2016JAT}{article}{
   author={Albiac, F.},
   author={Ansorena, J. L.},
   title={Characterization of 1-quasi-greedy bases},
   journal={J. Approx. Theory},
   volume={201},
   date={2016},
   pages={7--12},
}

\bib{AA}{article}{
   author={Albiac, F.},
   author={Ansorena, J.L.},
   title={Characterization of 1-almost greedy bases},
   journal={Rev. Mat. Complut.},
   volume={30},
   date={2017},
   number={1},
   pages={13--24},
}

\bib{AABBL}{article}{
   author={Albiac, F.},
   author={Ansorena, J. L.},
   author={Berasategui, M.},
   author={Bern\'{a}, P. M.},
   author={Lassalle, S.},
   title={Bidemocratic Bases and Their Connections with Other Greedy-Type
   Bases},
   journal={Constr. Approx.},
   volume={57},
   date={2023},
   number={1},
   pages={125--160},
}

\bib{AABW}{article}{
   author={Albiac, F.},
   author={Ansorena, J. L.},
   author={Bern\'{a}, P.M.},
   author={Wojtaszczyk, P.},
   title={Greedy approximation for biorthogonal systems in quasi-Banach
   spaces},
   journal={Dissertationes Math.},
   volume={560},
   date={2021},
   pages={88},
}

\bib{AlbiacKalton2016}{book}{
   author={Albiac, F.},
   author={Kalton, N. J.},
   title={Topics in Banach space theory},
   series={Graduate Texts in Mathematics},
   volume={233},
   edition={2},
   note={With a foreword by Gilles Godefroy},
   publisher={Springer, [Cham]},
   date={2016},
   pages={xx+508},
}
      
      \bib{AW2006}{article}{
   author={Albiac, F.},
   author={Wojtaszczyk, P.},
   title={Characterization of 1-greedy bases},
   journal={J. Approx. Theory},
   volume={138},
   date={2006},
   number={1},
   pages={65--86},
}

\bib{DKKT2003}{article}{
      author={Dilworth, S.J.},
      author={Kalton, N.J.},
      author={Kutzarova, D.},
      author={Temlyakov, V.N.},
       title={The thresholding greedy algorithm, greedy bases, and duality},
        date={2003},
        ISSN={0176-4276},
     journal={Constr. Approx.},
      volume={19},
      number={4},
       pages={575\ndash 597},
}

\bib{DOSZ2010}{article}{
   author={Dilworth, S. J.},
   author={Odell, E.},
   author={Schlumprecht, Th.},
   author={Zs\'{a}k, A.},
   title={On the convergence of greedy algorithms for initial segments of
   the Haar basis},
   journal={Math. Proc. Cambridge Philos. Soc.},
   volume={148},
   date={2010},
   number={3},
   pages={519--529},
}

\bib{DOSZ}{article}{
   author={Dilworth, S. J.},
   author={Odell, E.},
   author={Schlumprecht, Th.},
   author={Zs{\'a}k, A.},
   title={Renormings and symmetry properties of 1-greedy bases},
   journal={J. Approx. Theory},
   volume={163},
   date={2011},
   number={9},
   pages={1049--1075},
}

\bib{Livshits2010}{article}{
   author={Livshits, E. D.},
   title={On the convergence of a greedy algorithm in the Haar system in the
   spaces $L_p(0,1)$},
   language={Russian, with Russian summary},
   journal={Mat. Sb.},
   volume={201},
   date={2010},
   number={2},
   pages={95--130},
   issn={0368-8666},
   translation={
      journal={Sb. Math.},
      volume={201},
      date={2010},
      number={1-2},
      pages={253--288},}}

\bib{biorthogonal}{book}{
   author={H\'{a}jek, P.},
   author={Montesinos, V.},
   author={Vanderwerff, J.},
   author={Zizler, V.},
   title={Biorthogonal systems in Banach spaces},
   series={CMS Books in Mathematics/Ouvrages de Math\'{e}matiques de la SMC},
   volume={26},
   publisher={Springer, New York},
   date={2008},
   pages={xviii+339},
   isbn={978-0-387-68914-2},
   review={\MR{2359536}},
}

 \bib{KT99}{article}{
   author={Konyagin, S. V.},
   author={Temlyakov, V. N.},    
   title={A remark on greedy approximation in Banach spaces},
    journal={East J. Approx.},
    volume={5},
   date={1999},
   number={3},
  pages={365--379},
    issn={1310-6236},
 }

 \bib{LT1}{book}{
   author={Lindenstrauss, J.},
   author={Tzafriri, L.},
   title={Classical Banach spaces. I},
   series={Ergebnisse der Mathematik und ihrer Grenzgebiete, Band 92},
   note={Sequence spaces},
   publisher={Springer-Verlag, Berlin-New York},
   date={1977},
   pages={xiii+188},
   isbn={3-540-08072-4},
}

\bib{Woj2000}{article}{
author={Wojtaszczyk, P.},
title={Greedy algorithm for general biorthogonal systems},
date={2000},
journal={J. Approx. Theory},
volume={107},
number={2},
pages={293\ndash 314},
}

\bib{Wojt2003}{article}{
   author={Wojtaszczyk, P.},
   title={Greedy type bases in Banach spaces},
   conference={
      title={Constructive theory of functions},
   },
   book={
      publisher={DARBA, Sofia},
   },
   date={2003},
   pages={136--155},
}

\end{biblist}
\end{bibsection}

\end{document}